\def\expandafter\normalsize\expandafter{%
\normalsize
}
\normalfont\fontsize{15}{15}\bfseries}{\thesection}{1em}{}
\newtheorem{theorem}{Theorem}[section]
\newtheorem{corollary}[theorem]{Corollary}
\newtheorem{lemma}[theorem]{Lemma}
\newtheorem{prop}[theorem]{Proposition}
\newtheorem{remark}[theorem]{Remark}
\newtheorem{defn}[theorem]{Definition}
\newtheorem{ex}[theorem]{Example}
\DeclareMathOperator{\nab}{\mathbb{\nabla}}
\DeclareMathOperator{\ric}{Ric}
\DeclareMathOperator{\hess}{Hess}
\DeclareMathOperator{\Lie}{\mathcal{L}}
\newcommand{\bigslant}[2]{{\raisebox{.2em}{$#1$}\left/\raisebox{-.2em}{$#2$}\right.}}
\renewenvironment{proof}[1][\proofname]{\par
  \pushQED{\qed}%
  \normalfont \topsep6\p@\@plus6\p@\relax
  \trivlist
  \item[\hskip\labelsep
        \itshape
    #1\@addpunct{.}]\mbox{}\\*
}{%
 \popQED\endtrivlist\@endpefalse
}
\begin{document}

\title[Splitting Theorem, Topology, and Nonnegative N-Bakry \'Emery Ricci Curvature]{\vspace{-2cm} The Splitting Theorem and Topology of Noncompact Spaces with Nonnegative N-Bakry \'Emery Ricci Curvature}
\author{Alice Lim}
\address{215 Carnegie Building\\
Dept. of Math, Syracuse University\\
Syracuse, NY, 13244.}
\email{awlim100@syr.edu}
\urladdr{https://awlim100.expressions.syr.edu}

%\small Email Address: \href{mailto:awlim100@syr.edu}{awlim100@syr.edu}\\ 
%\small Address: Mathematics Department, Syracuse University, Syracuse, NY 13244}
%\date{\small\today}
%\date{}
\maketitle

\begin{abstract} In this paper, we generalize topological results known for noncompact manifolds with nonnegative Ricci curvature to spaces with nonnegative $N$-Bakry \'Emery Ricci curvature. We study the Splitting Theorem and a property called the geodesic loops to infinity property in relation to spaces with nonnegative $N$-Bakry \'Emery Ricci Curvature. In addition, we show that if $M^n$ is a complete, noncompact Riemannian manifold with nonnegative $N$-Bakry \'Emery Ricci curvature where $N>n$, then $H_{n-1}(M,\mathbb{Z})$ is $0$.
\end{abstract}

\maketitle

%    Text of article.
\section{Introduction}

One of the themes of Riemannian geometry is analyzing the topological implications of a manifold admitting a metric with a curvature constraint. In 1976, Yau proved that if $M^n$ is a complete, noncompact manifold with $\ric>0$, then $H_{n-1}(M,\mathbb{R})=0$ \cite{Yau}. In 2000, Shen-Sormani generalized this to show that such a space has $H_{n-1}(M,\mathbb{Z})=0$ by studying topological properties like the loops to infinity property \cite{ShSo}. In this paper, we will generalize these results to that of Riemannian manifolds with non-negative and positive Bakry-\'Emery Ricci curvature. Our results for positive curvature are optimal in the sense that none of the assumptions can be removed (See Examples \ref{ex:flz_homology_equals_z}, \ref{ex:spherecrossreal} and \ref{ex:hyperbolicspace}). 
\vspace{1em}

Riemannian manifolds with smooth positive density function $e^{-f}$ were first studied by Lichnerowicz in 1971 \cite{Lich}. Bakry and \'Emery studied this further in order to study diffusion processes \cite{BE}. More recently, Bakry-\'Emery Ricci tensors have been studied in optimal transport, Ricci flow, and general relativity. Qian proved in \cite{ZhongminQian} that Myers' Theorem holds for gradient $N$-Bakry \'Emery Ricci curvature. In \cite{Lott}, Lott gives topological consequences to nonnegative and positive Bakry-\'Emery Ricci curvature, as well as relations between the Bakry-\'Emery Ricci curvature bounded below and measured Gromov-Hausdorff limits. In \cite{WeiWylie}, Wei-Wylie proved Bakry-\'Emery Ricci curvature versions of the comparison theorems and the volume comparison theorem.  Fang-Li-Zhang in \cite{FLZ}, Khuri-Woolgar-Wylie in \cite{KhWoWy}, Munteanu-Wang in \cite{MunteanuWang}, and Wylie in \cite{Wy} also prove different versions of the Splitting Theorem for nonnegative Bakry-\'Emery Ricci curvature. We define the $N$-Bakry-\' Emery Ricci tensors as follows: 

\begin{defn}\label{defn:ricx}
Let $X$ be a vector field on $(M^n,g)$, a Riemannian manifold. The $N$-Bakry-\' Emery tensor is $$\ric_X^N:=\ric+\displaystyle\frac{1}{2}\Lie_X g-\frac{1}{N-n}X^*\otimes X^*$$

where $\Lie_X g$ is the Lie derivative of $g$ with respect to $X$, and \begin{align}
X^*  : T_p M & \rightarrow \mathbb{R} \nonumber \\
 Y & \mapsto g(X,Y)\nonumber.
\end{align}

If $X=\nabla \phi$ where $\phi:M\to\mathbb{R}$ is a smooth function, the $N$-Bakry-\' Emery Ricci tensor is  $$\ric_\phi^N:=\ric+\hess\phi-\displaystyle\frac{1}{N-n}d\phi\otimes d\phi.$$

If $X=\nabla \phi$ and $N=\infty$, then we denote $\ric_\phi:=\ric_\phi^\infty=\ric+\hess\phi$.
\end{defn}

\begin{remark}
Note that $\ric_X^N$ is a generalization of $\ric_\phi^N$ because if $X=\nab \phi$, then $\ric_X^N=\ric_\phi^N$. Similarly, we call $\ric_\phi^N$ a generalization of $\ric$ because if $\phi$ is constant, then $\ric_\phi^N=\ric$. %\textcolor{blue}{Similarly, setting $\ric_X^N$ equal to a $\lambda g$, where $\lambda$ is a constant, gives us an $N$-quasi Einstein metric, which is a generalization of the notion of Einstein metrics.}
\end{remark}

The constant $N$ is also called the synthetic dimension. According to Lott in \cite{Lott}, if $M$ is compact and satisfies $\ric_f^N>0$, then the warped product metric on $M\times S^{n-N}$ satisfies $\ric>0$. When $N=\infty$, the $\infty$-quasi Einstein equation is the Ricci soliton equation. When $N<0$, Wylie-Woolgar study $\ric_f^N$ in the context of Lorentzian scalar-tensor gravitational theories in cosomology in \cite{WylieWoolgar}. Milman \cite{Milman}, Ohta \cite{Ohta}, and Wylie \cite{Wy} also give descriptions of the condition $\ric_X^N$ bounded above with $N<0$.

\vspace{1em}

Next, we recall the definition of a line.
\vspace{1em}

\begin{defn}
A line is a geodesic $\gamma: (-\infty,\infty)\rightarrow M$, which is minimizing between any two points. See Figure \ref{fig:line}.
\end{defn}
\vspace{1em}

\begin{figure}[h!]
\includegraphics [width = .55\textwidth, height=.1\textheight]{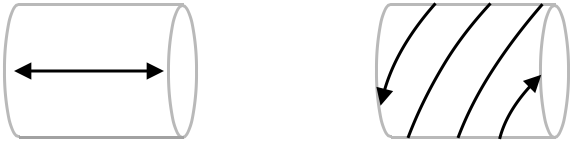}
\caption{The left image depicts a line on a cylinder. The right image does not depict a line on a cylinder because the curve is not minimizing.}
\label{fig:line}
\end{figure}     

In Section \ref{section:splitting}, we will review the Splitting Theorems which we will use throughout the paper. In particular, we will use a criterion by Fang-Li-Zhang to prove a new version of the Splitting Theorem.

\begin{prop}\label{prop:FLZ}
If $\ric_\phi\geq 0$ and $\nabla\phi\rightarrow 0$ at $\infty$, then the Splitting Theorem holds. 
\end{prop}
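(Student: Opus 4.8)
The plan is to run the Cheeger--Gromoll Busemann function argument with the ordinary Laplacian replaced by the drift Laplacian $\Delta_\phi := \Delta - \langle \nabla\phi, \nabla\,\cdot\,\rangle$, which is the operator naturally adapted to $\ric_\phi$. Assume $M$ is complete and noncompact and contains a line $\gamma:(-\infty,\infty)\to M$. I would split it into the two rays $\gamma|_{[0,\infty)}$ and $\gamma|_{(-\infty,0]}$ and form the associated Busemann functions
$$b_\pm(x) = \lim_{t\to\infty}\big(d(x,\gamma(\pm t)) - t\big).$$
A direct computation on the line gives $b_+(\gamma(s)) = -s$ and $b_-(\gamma(s)) = s$, so $b_+ + b_- \equiv 0$ along $\gamma$, while the triangle inequality together with the minimality of $\gamma$ forces $b_+ + b_- \geq 0$ everywhere. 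Thus $b_+ + b_-$ attains an interior minimum, namely $0$, all along $\gamma$.

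The analytic heart of the proof is to show that each $b_\pm$ is $\phi$-superharmonic, i.e. $\Delta_\phi b_\pm \leq 0$ in the barrier sense. For this I would invoke the Wei--Wylie mean curvature comparison for $\ric_\phi \geq 0$: along a minimizing geodesic emanating from the foot point $\gamma(\pm t)$ one has $\Delta_\phi\, d(\cdot,\gamma(\pm t)) \leq \tfrac{n-1}{d(\cdot,\gamma(\pm t))} + \mathcal E_t$, where the error term $\mathcal E_t$ is controlled by the radial derivative of $\phi$ along that geodesic. Since $b_\pm$ is the limit of $d(\cdot,\gamma(\pm t)) - t$ as $t\to\infty$, the term $\tfrac{n-1}{d}$ tends to $0$ because the foot points recede to infinity; the hypothesis $\nabla\phi\to 0$ at $\infty$ is exactly what makes $\mathcal E_t\to 0$ as well, since the relevant geodesics spend an ever larger proportion of their length in the region where $|\nabla\phi|$ is small. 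Passing to the limit yields $\Delta_\phi b_\pm \leq 0$. Consequently $b_+ + b_-$ is $\phi$-superharmonic and attains an interior minimum, so the strong minimum principle for the elliptic operator $\Delta_\phi$ forces $b_+ + b_- \equiv 0$. Hence $b_- = -b_+$, and since $-b_+$ is also $\phi$-superharmonic, $b_+$ is in fact $\phi$-harmonic; elliptic regularity then makes $b_+$ smooth, with $|\nabla b_+| \equiv 1$.

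To extract the splitting I would feed $u := b_+$ into the weighted Bochner formula
$$\tfrac12\,\Delta_\phi |\nabla u|^2 = |\hess u|^2 + \langle \nabla u, \nabla \Delta_\phi u\rangle + \ric_\phi(\nabla u, \nabla u).$$
With $\Delta_\phi u = 0$ and $|\nabla u|\equiv 1$ the left side and the middle term vanish, leaving $0 = |\hess u|^2 + \ric_\phi(\nabla u,\nabla u)$; since $\ric_\phi \geq 0$, both terms must vanish, so $\hess b_+ \equiv 0$. Thus $\nabla b_+$ is a parallel unit vector field, and the standard de Rham--type argument gives an isometric splitting $M \cong \mathbb{R}\times N$ with $\partial_t = \nabla b_+$ spanning the $\mathbb{R}$ factor. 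The relation $\ric_\phi(\partial_t,\partial_t)=0$ together with flatness of the $\mathbb{R}$ factor gives $\partial_t^2\phi = 0$, so $\phi$ is affine along $\mathbb{R}$; the decay $\nabla\phi\to 0$ then forces $\partial_t\phi\equiv 0$, so $\phi$ descends to $N$ and the splitting is compatible with the weighted structure. Alternatively, once $\Delta_\phi b_\pm \le 0$ is established one can simply quote the splitting criterion of Fang--Li--Zhang.

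The step I expect to be the main obstacle is the limit argument establishing $\Delta_\phi b_\pm \leq 0$: I must show the comparison error $\mathcal E_t$ is uniformly controlled on compact sets and tends to $0$ as $t\to\infty$, which requires quantifying the statement ``$\nabla\phi\to 0$ at $\infty$'' against the geometry of the minimizing geodesics to $\gamma(\pm t)$, and handling everything in the barrier sense before upgrading to a genuine harmonic function. The Busemann functions are a priori only Lipschitz, so both the maximum principle and the Bochner computation must be justified through support functions, or via the standard smoothing that becomes available once harmonicity is known.
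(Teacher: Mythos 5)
Your outline is essentially correct, but it takes a genuinely different and much longer route than the paper. The paper does not touch Busemann functions at all: it observes that $\nabla\phi\rightarrow 0$ at infinity implies $\phi'(\gamma(t))=\langle\nabla\phi,\dot\gamma\rangle\leq\varepsilon$ outside a compact set, hence $\phi(\gamma(t))\leq\varepsilon t+C$ along any ray, hence $\lim_{\rho\to\infty}\frac{1}{\rho^2}\int_0^\rho\phi(\gamma(s))\,ds\leq\varepsilon/2$ for every $\varepsilon>0$, and then quotes the Fang--Li--Zhang criterion (their Remark~3.1) which says this integral condition alone yields the splitting. You instead propose to re-run the entire Cheeger--Gromoll machinery for the drift Laplacian $\Delta_\phi$. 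That can be made to work: the step you correctly flag as the main obstacle --- passing the weighted Laplacian comparison to the limit to get $\Delta_\phi b_\pm\leq 0$ in the barrier sense --- goes through because $\ric_\phi\geq 0$ makes the weighted mean curvature $m_\phi=\Delta_\phi d$ nonincreasing along radial geodesics, so $\Delta_\phi d(\cdot,\gamma(t))\leq \frac{n-1}{r_0}-\partial_r\phi(\sigma_t(r_0))$ for any intermediate parameter $r_0$; choosing $r_0\to\infty$ slowly relative to $d(\cdot,\gamma(t))$ keeps the comparison point in the region where $|\nabla\phi|<\varepsilon$ and kills both terms. The remaining steps (minimum principle for $\Delta_\phi$, weighted Bochner formula, de Rham splitting) are the standard weighted adaptations and are fine. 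What each approach buys: yours is self-contained and uses the pointwise decay of $\nabla\phi$ in an essential way, while the paper's two-line reduction exposes that only the far weaker hypothesis $\lim_{\rho\to\infty}\frac{1}{\rho^2}\int_0^\rho\phi\leq 0$ on $\phi$ itself (no gradient control) is actually needed --- which is the real content of the Fang--Li--Zhang remark and is also what the paper's Example \ref{ex:flz_homology_equals_z} is calibrated against. If you intend your write-up to stand alone you must fill in the barrier-sense bookkeeping you mention; if you are willing to cite Fang--Li--Zhang anyway (as your last sentence suggests), the paper's reduction is the shorter and sharper path.
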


In \cite[Theorem~1.1]{FLZ}, Fang-Li-Zhang prove that if $(M,g)$ is a complete connected Riemannian manifold with $\ric_\phi\geq0$ for $\phi\in C^2(M)$ bounded above uniformly on $M$, then the Splitting Theorem holds. In \cite[Remark~3.1]{FLZ}, Fang-Li-Zhang note that their proof only relies on the condition $\displaystyle\lim_{\rho\rightarrow\infty}\frac{1}{\rho^2}\int_0^\rho \phi(\gamma(s))ds\leq 0$, where $\rho$ is the distance function and $\gamma$ is any ray. Thus, if $(M,g)$ is a complete connected Riemannian manifold with $\ric_\phi^\infty\geq 0$ and if $\phi$ satisfies the condition $\displaystyle\lim_{\rho\rightarrow\infty}\frac{1}{\rho^2}\int_0^\rho \phi(\gamma(s))ds\leq 0$, where $\rho$ is the distance function and $\gamma$ is any ray, then the Splitting Theorem holds \cite[Theorem~1.1, Remark~3.1]{FLZ}.

The main tool used by Shen and Sormani in \cite{ShSo} is the Cheeger-Gromoll Splitting Theorem, which states that if $\ric\geq 0$ and $M$ contains a line, then $M$ is isometric to a product metric, $\mathbb{R}^k\times N$, where $N$ doesn't contain any lines \cite{ChGr}.
\vspace{1em}

Using versions of the Splitting Theorem for $\ric_X^N$ and following an argument by Carron and Pedon in \cite{CarPed}, and Shen and Sormani in \cite{ShSo}, we will prove the following two theorems:

\begin{theorem}\label{thm:1}
Let $M^n$ be complete and noncompact. 

\begin{enumerate}
    \item If $\ric_X^N>0$ for $N>n$, then $H_{n-1}(M,\mathbb{Z})=0$.
    \item If $\ric_\phi^N > 0$ with $\phi<K$ for some $k\in\mathbb{R}$ and $N\leq 1$ , then $H_{n-1}(M,\mathbb{Z})=0$.
    \item If $\ric_\phi^\infty> 0$ with $\nabla\phi\rightarrow 0$ at $\infty$, then $H_{n-1}(M,\mathbb{Z})=0$.
    
\end{enumerate}
\end{theorem}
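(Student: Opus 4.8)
The plan is to follow the template of Shen--Sormani and Carron--Pedon, replacing each use of the Cheeger--Gromoll Splitting Theorem by the appropriate $N$-Bakry--\'Emery analogue. The argument splits into three conceptual pieces: a reformulation of the vanishing of $H_{n-1}(M,\mathbb{Z})$ in terms of hypersurfaces and intersection numbers, the geodesic loops to infinity property, which controls the non-separating (and one-sided, hence torsion) part, and a one-end statement coming from \emph{strict} positivity of the curvature, which controls the separating part.

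First I would record, for each of the three hypotheses, the version of the Splitting Theorem that applies: in case (3) this is precisely Proposition \ref{prop:FLZ}, since along any ray the condition $\nabla\phi\to 0$ at $\infty$ guarantees the Fang--Li--Zhang integral criterion; in cases (1) and (2) I would invoke the corresponding $\ric_X^N$ (finite $N>n$) and $\ric_\phi^N$ ($N\le 1$, $\phi$ bounded above) splitting theorems developed in the splitting section. The feature common to all three is that a \emph{line} forces an isometric splitting $M\cong\mathbb{R}\times N'$ on which the weighted curvature is only nonnegative; under the strict inequality $\ric>0$ this is impossible, so $M$ contains no line. Since a complete noncompact manifold with at least two ends always contains a line, I conclude that under each hypothesis $M$ has exactly one end.

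Next I would establish the geodesic loops to infinity property under the corresponding \emph{nonnegative} curvature hypothesis. The mechanism is the standard one: if some loop $C$ based at $p$ could not be homotoped out of a compact set $K$, then choosing points $q_i\to\infty$ together with minimizing geodesics realizing the homotopy obstruction produces, in the limit, a line passing through a point near $C$; feeding this line into the Splitting Theorem yields a contradiction, or else an explicit product structure on which the property is checked directly. With the property in hand, I would run the intersection-theoretic argument of Shen--Sormani and Carron--Pedon. By Poincar\'e--Lefschetz duality a nonzero class in $H_{n-1}(M,\mathbb{Z})$ is represented by a closed hypersurface $\Sigma$. If $\Sigma$ is non-separating, there is a loop $\gamma$ meeting $\Sigma$ transversally with nonzero (integral or mod-$2$) intersection number; pushing $\gamma$ out of a compact set containing $\Sigma$ would force this intersection number to vanish, contradicting the loops to infinity property. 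If instead $\Sigma$ separates $M$, then since $M$ has a single end one of the two sides is precompact, so $\Sigma$ bounds and is null-homologous. Hence $H_{n-1}(M,\mathbb{Z})=0$.

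The main obstacle I expect is establishing the loops to infinity property in the weighted setting, that is, making the limiting construction of the line compatible with the hypotheses of the Splitting Theorem. In case (3) the delicate point is that $\nabla\phi\to 0$ at $\infty$ must persist along the limiting line so that Proposition \ref{prop:FLZ} genuinely applies; in case (1), where $X$ need not be a gradient, the Bochner and Hessian comparison tools for a potential are unavailable and one must argue directly with the general $\ric_X^N$ comparison; and in case (2) the sign conventions for $N\le 1$ demand care that the direction of every comparison inequality is correct. I therefore anticipate that the bulk of the work lies in the loops-to-infinity step, while the passage from that property and one-endedness to the vanishing of $H_{n-1}(M,\mathbb{Z})$ is a direct adaptation of the existing topological arguments.
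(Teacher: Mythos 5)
Your proposal matches the paper's (implicitly assembled) proof of Theorem \ref{thm:1}: strict positivity plus the relevant splitting theorem rules out lines in $M$ and its universal cover, which simultaneously forces one end and, via Sormani's Line Theorem, yields the geodesic loops to infinity property (this is exactly Corollary \ref{corollary:geodesicloopstoinfinity}), after which the vanishing of $H_{n-1}(M,\mathbb{Z})$ follows from the Shen--Sormani/Carron--Pedon topological argument, as in Theorem \ref{thm:main theorem with Abelian group}(ii). The route and the key lemmas are the same, so no further comparison is needed.
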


\begin{theorem}\label{thm:2} Let $M^n$ be complete and noncompact. 
\begin{enumerate}
    \item If $\ric_X^N\geq0$ for $N>n$, then $H_{n-1}(M,\mathbb{Z})=0\text{ or }\mathbb{Z}$.
    \item If $\ric_\phi^N \geq 0$ with $\phi<K$ for some $k\in\mathbb{R}$ and $N< 1$ , then $H_{n-1}(M,\mathbb{Z})=0\text{ or }\mathbb{Z}$.
    \item If $\ric_\phi^N \geq 0$ with $|\phi|<K$ for some $k\in\mathbb{R}$ and $N=1$ , then $H_{n-1}(M,\mathbb{Z})=0\text{ or }\mathbb{Z}$.
    \item If $\ric_\phi^\infty\geq 0$ with $\nabla\phi\rightarrow 0$ at $\infty$, then $H_{n-1}(M,\mathbb{Z})=0$ or $\mathbb{Z}$.
    \end{enumerate}
\end{theorem}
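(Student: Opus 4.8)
The plan is to follow the proof of Theorem \ref{thm:1} verbatim up to the point where a line is produced, and then, exploiting that nonnegativity (unlike strict positivity) no longer forbids lines, to read off the homology from the resulting splitting. In each of the four cases the only geometric input is the Splitting Theorem appropriate to that curvature hypothesis, as collected in Section \ref{section:splitting}: for (1) the splitting theorem for $\ric_X^N \geq 0$ with $N > n$; for (2) and (3) the splitting theorems for $\ric_\phi^N \geq 0$ in the ranges $N < 1$ (needing only $\phi < K$) and $N = 1$ (needing the two-sided bound $|\phi| < K$); and for (4) Proposition \ref{prop:FLZ}, which requires $\nabla \phi \to 0$ at infinity. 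Thus the hypotheses enter the argument solely through which splitting theorem is available.

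The part shared with Theorem \ref{thm:1} is the implication that a nonzero class forces a line. If $H_{n-1}(M,\mathbb{Z}) = 0$ there is nothing to prove, so suppose $\alpha \in H_{n-1}(M,\mathbb{Z})$ is nonzero and represent it by a compact embedded hypersurface $\Sigma$ (passing to the orientation double cover if $\Sigma$ is one-sided). When $\Sigma$ separates $M$, the hypothesis $\alpha \neq 0$ forces both sides to be noncompact, so choosing $p_i \to \infty$ in the two sides and taking a limit of minimizing segments from $p_1$ to $p_2$ --- each of which must cross the compact set $\Sigma$ --- produces a minimizing line, exactly as in the Cheeger--Gromoll argument. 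The non-separating case is handled by the geodesic loops to infinity property developed in the earlier sections following Shen--Sormani \cite{ShSo} and Carron--Pedon \cite{CarPed}: failure of that property is what produces the line, and the Bakry--\'Emery comparison estimates of Wei--Wylie \cite{WeiWylie} are what make the loops-to-infinity dichotomy available in the present setting.

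Given the line, I would invoke the relevant Splitting Theorem to obtain an isometric product $M = \mathbb{R} \times N$ with $N$ a connected $(n-1)$-dimensional manifold. Since $\mathbb{R} \times N$ deformation retracts onto $N$, this yields $H_{n-1}(M,\mathbb{Z}) \cong H_{n-1}(N,\mathbb{Z})$, and the latter is the top-dimensional integral homology of a connected $(n-1)$-manifold, hence $\mathbb{Z}$ when $N$ is closed and orientable and $0$ otherwise. In every case $H_{n-1}(M,\mathbb{Z}) \in \{0, \mathbb{Z}\}$, which is the claim; the separate hypotheses in (1)--(4) are consumed only in selecting the splitting theorem used to pass from the line to the product.

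I expect the main obstacle to be the production of the line itself under each Bakry--\'Emery hypothesis, that is, establishing the geodesic loops to infinity dichotomy in the non-separating case. The excess and comparison arguments underlying Sormani's loops-to-infinity construction must be re-run with the $N$-Bakry--\'Emery tensor, and the permissible range of $N$ together with the growth or boundedness imposed on $\phi$ must be tracked so that the line output actually satisfies the hypotheses of the corresponding Splitting Theorem. Case (4), $N = \infty$, is the most delicate, since $\ric_\phi \geq 0$ by itself does not force a splitting and the condition $\nabla \phi \to 0$ must survive the limiting construction in order to apply Proposition \ref{prop:FLZ}.
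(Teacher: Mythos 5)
There is a genuine gap, and it sits exactly where the paper has to work hardest. Your plan applies the Splitting Theorem to $M$ itself once a line is produced, but in the decisive case the line does not live in $M$: when $M$ is one-ended and some class fails the loops to infinity property, Sormani's Line Theorem (Theorem \ref{thm: linetheorem}) produces a line only in the universal cover $\widetilde{M}$ (similarly, in the orientable one-ended case the relevant line lives in a two-ended double cover). So the splitting you obtain is of a cover, not of $M$, and $H_{n-1}(M,\mathbb{Z})$ cannot be read off as the top homology of a splitting factor of $M$. One must instead analyze how the deck transformations act on the (possibly warped) product structure of the cover. This is the content of the paper's Lemma \ref{lemma: mainresult}, which shows $h_*(\widetilde{\gamma}')=-\widetilde{\gamma}'$ and that $\widetilde{\gamma}$ lies in the split direction, leading to Corollary \ref{corollary:flatnormalbundle} ($M$ is a flat normal bundle over a compact totally geodesic soul) and then to the homology computation in Corollary \ref{cor:integralhomologybutnon-orientable}. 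Your proposal never engages with this step, and it is also where the hypothesis $|\phi|<K$ in case (3) is genuinely consumed: for $N=1$ the cover only admits a warped product splitting $g=e^{2f(r)/(n-1)}g_N+dr^2$ (Theorem \ref{thm: wylabel2}), and the two-sided bound on $\phi$ is needed to control the warping factor $e^{4f/(n-1)}$ when comparing $|x_i'(t)|$ at different parameter values. Your phrase ``invoke the relevant Splitting Theorem to obtain an isometric product $M=\mathbb{R}\times N$'' is therefore false as stated in case (3) even when the line does lie in $M$.

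For comparison, the paper's route is different in structure from yours: it does not represent a homology class by a hypersurface at all. In the orientable case it uses Carron--Pedon's purely topological criterion (Proposition \ref{prop:carronandpedon}: one end plus all twofold normal covers one-ended forces $H_{n-1}(M,\mathbb{Z})=0$), applying the Splitting Theorem only to a two-ended double cover or to a two-ended $M$; in the non-orientable case it runs the loops-to-infinity machinery culminating in Lemma \ref{lemma: mainresult}. The parts of your sketch that do go through (two or more ends force a line in $M$ itself, hence a splitting with compact cross-section, hence $H_{n-1}\in\{0,\mathbb{Z}\}$) cover only the easy case; to complete the argument you would need to supply the deck-transformation analysis on the warped product splitting of the cover, or an equivalent substitute.
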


Next, we introduce the notion of a loop being homotopic to another loop along a ray, the loops to infinity property, and our main result regarding the loops to infinity property.

\begin{defn}
Given a ray $\gamma$ and a loop $C:[0,L]\rightarrow M$ based at $\gamma(0)$, we say that a loop $\widetilde{C}:[0,L]\rightarrow M$ is homotopic to $C$ along $\gamma$ if there exists $r>0$ with $\widetilde{C}(0)=\widetilde{C}(L)=\gamma(r)$ and the loop, constructed by joining $\gamma$ from $0$ to $r$ with $C$ from $0$ to $L$ and then with $\gamma$ from $r$ to $0$ is homotopic to $C$, in $\pi_1(M,\gamma(0))$.
\end{defn}

\begin{defn}
An element $h\in\pi_1(M,\gamma(0))$ has the geodesic loops to infinity property along $\gamma$ if for any $A\subset M$ compact, there exists a loop $\widetilde{C}\subset M\setminus A$ which is homotopic to a representative loop, C of $h$ along $\gamma$.
\end{defn}

\begin{theorem}\label{theorem:mainlemmashowsupinthistheorem}
Let $M^n$ be a complete, noncompact Riemannian manifold, and suppose one of the following holds:
\begin{enumerate}
    \item $\ric_X^N\geq 0$ with $N>n$.
    \item $\ric_\phi^N\geq 0$ with $N=\infty$, $\phi$ bounded above.
    \item $\ric_\phi^N\geq 0$ with $N\leq 1$ and $\phi$ bounded above.
    \item $\ric_\phi^N\geq 0$ with $N=\infty$, $\nabla\phi\rightarrow 0$ at $\infty$.
\end{enumerate}
\vspace{1em}
Then, 
\begin{enumerate}[label=(\roman*)]
\item If $g\in \pi_1(M)$, then either $g$ or $g^2$ has the geodesic loops to infinity property.
\item If there exists $g\in \pi_1(M)$ which does not satisfy the loops to infinity property along a given ray $\gamma$, then for all $h\in \pi_1(M,\gamma(0))$, $h$ must satisfy 
$$h_*(\widetilde{\gamma}'(t))=\pm\widetilde{\gamma}'(t).$$
Also, $M$ must have a split double cover which lifts $\gamma$ to a line.
\end{enumerate}
\end{theorem}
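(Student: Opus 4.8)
The plan is to run the Shen--Sormani \cite{ShSo} / Carron--Pedon \cite{CarPed} argument, using as the engine the Splitting Theorems recorded in Section \ref{section:splitting}, which under each of hypotheses (1)--(4) yield an isometric splitting whenever a line is present. The first thing I would check is that each hypothesis is inherited by every Riemannian cover $\pi\colon \widehat M\to M$: the curvature inequality $\ric_X^N\ge 0$ (resp.\ $\ric_\phi^N\ge 0$) is local, so it lifts through the local isometry; the potential pulls back to the deck-invariant function $\phi\circ\pi$, still bounded above in cases (2),(3); and in case (4) the along-rays integral condition of \cite[Remark~3.1]{FLZ} survives because $\nab\phi\to 0$ forces $\phi$ to grow sublinearly along geodesics, a property preserved under projection of geodesics. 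This legitimizes applying the Splitting Theorem on $\widetilde M$ and on intermediate covers.

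Next comes the geometric heart of the argument: converting failure of the loops to infinity property into the existence of a line. Suppose $g\in\pi_1(M,\gamma(0))$ does not have the geodesic loops to infinity property along $\gamma$, witnessed by a compact set $A=\overline{B(\gamma(0),R_0)}$. Pass to the universal cover $\widetilde M$, lift $\gamma$ to a ray $\widetilde\gamma$ from a point $\widetilde p$ over $\gamma(0)$, and view $g$ as a deck transformation. For each $i$ let $\widetilde\sigma_i$ be a minimizing geodesic from $\widetilde\gamma(i)$ to $g\,\widetilde\gamma(i)$; this is the lift of the natural loop obtained by homotoping $C$ along $\gamma$ out to radius $i$. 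The key observation is that $C$ can be pushed along $\gamma$ into $M\setminus A$ at stage $i$ precisely when $\widetilde\sigma_i$ avoids $\pi^{-1}(A)$, so failure of the property forces every $\widetilde\sigma_i$ to pass through a point $q_i$ with $\pi(q_i)\in A$. Since $\widetilde\gamma(i)\to\infty$ and $g\,\widetilde\gamma(i)=g\cdot\widetilde\gamma(i)\to\infty$ while the $q_i$ stay in a fixed compact region (after translating by deck transformations), an Arzel\`a--Ascoli argument extracts from the $\widetilde\sigma_i$ a geodesic line through a limit point $q_\infty$. I expect this step---making precise the equivalence between the topological homotopy condition and the intersection pattern of the $\widetilde\sigma_i$, and verifying that both halves of the limit geodesic are infinite and that the limit is globally minimizing---to be the main obstacle.

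With a line in hand, the Splitting Theorem of the first step gives an isometric splitting $\widetilde M\cong\mathbb{R}\times N$ with the line equal to $\mathbb{R}\times\{pt\}$ and $\widetilde\gamma'$ pointing along the $\mathbb{R}$-factor. Because one end of each $\widetilde\sigma_i$ is asymptotic to $\widetilde\gamma$ and the other to $g\,\widetilde\gamma$, the two ends of the limit line are oppositely directed, forcing $g_*\widetilde\gamma'=-\widetilde\gamma'$: a failing element must reverse the line. This yields part (i): if $g$ does not already have the property then it reverses the $\mathbb{R}$-factor, so $g^2$ preserves it, and a line-preserving deck transformation $h=(t\mapsto t+a,\psi)$ joins $\widetilde\gamma(i)$ to $h\,\widetilde\gamma(i)$ by geodesics of bounded length sitting at height $t\approx i\to\infty$; projecting these to $M$ produces loops homotopic to $C$ along $\gamma$ that leave every compact set, since their lifts recede to infinity in the $\mathbb{R}$-direction together with $\widetilde\gamma(i)$. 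Hence $g^2$ has the loops to infinity property, by the Shen--Sormani mechanism.

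Finally, for part (ii), suppose some $g$ fails the property, producing the splitting $\widetilde M\cong\mathbb{R}\times N$ above. Following the argument of Carron--Pedon \cite{CarPed}, one shows every $h\in\pi_1(M,\gamma(0))$ must preserve the line direction up to sign, that is, $h_*\widetilde\gamma'(t)=\pm\widetilde\gamma'(t)$; the point is that the deck group preserves the family of parallel lines determined by $\widetilde\gamma$ together with the associated Busemann functions, so no element can rotate the $\mathbb{R}$-factor away from $\widetilde\gamma'$. The resulting sign defines a homomorphism $\pi_1(M,\gamma(0))\to\{\pm1\}$ whose kernel $K$ is exactly the orientation-preserving subgroup; since a failing $g$ reverses the line (previous paragraph) we have $g\notin K$, so $K$ has index $2$. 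As $K$ acts by $\mathbb{R}$-orientation-preserving product isometries, the double cover $M'=\widetilde M/K\to M$ retains the splitting and lifts $\gamma$ to a line, which is the asserted split double cover.
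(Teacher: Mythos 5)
Your outline reproduces the paper's strategy for the cases where a genuine isometric splitting is available --- Sormani's Line Theorem converts failure of the loops to infinity property into a line in $\widetilde{M}$, the Splitting Theorem gives $\widetilde{M}\cong\mathbb{R}\times N$, and the Shen--Sormani/Sormani analysis of the connecting geodesics $\widetilde\sigma_i$ then forces $g_*\widetilde\gamma'=-\widetilde\gamma'$, from which (i) and (ii) follow. That is exactly how the paper handles hypotheses (1), (2), (4), and the sub-case $N<1$ of (3).

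However, there is a genuine gap: hypothesis (3) includes $N=1$, and there the Splitting Theorem does \emph{not} produce an isometric product. By Wylie's result (Theorem \ref{thm: wylabel2} in the paper), a line only yields either the Cheeger--Gromoll splitting or a warped product $\widetilde g=e^{\frac{2f(r)}{n-1}}g_N+dr^2$ over $\mathbb{R}$, and your sentence ``the Splitting Theorem of the first step gives an isometric splitting $\widetilde M\cong\mathbb{R}\times N$ with the line equal to $\mathbb{R}\times\{pt\}$'' is false in that case. The entire content of the paper's main lemma (Lemma \ref{lemma: mainresult}) is to repair precisely this step: using O'Neill's conserved quantity $e^{\frac{4f(y(t))}{n-1}}|x'(t)|^2=\mathrm{const}$ for geodesics in a warped product together with a \emph{two-sided} bound $|\phi|\le K$ (needed to bound the warping factor away from $0$ and $\infty$), one shows that the horizontal components $x_i'$ of the connecting geodesics tend to $0$ uniformly, that the limit line is vertical because $N$ admits no line, that $\widetilde\gamma$ itself lies in the split direction, and finally that $h_{\mathbb{R}}$ must be a reflection. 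None of this is automatic from the product-case argument you invoke: in a warped product the lengths $d(\widetilde\gamma(i),g^2\widetilde\gamma(i))$ are controlled only after the warping function is bounded on both sides, and the alignment of $\widetilde\gamma$ with the $\mathbb{R}$-factor is the hard quantitative estimate, not a formal consequence of the splitting. Separately, your claim that hypothesis (4) passes to covers deserves more care: $\nabla\phi\to 0$ at infinity in $M$ need not hold at infinity in $\widetilde{M}$ (points far out in the cover can project near the basepoint), so one must verify the Fang--Li--Zhang integral condition along rays of the cover directly rather than asserting it is ``preserved under projection of geodesics.''
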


Next, we use various versions of the Splitting Theorem and we follow the proof of Shen-Sormani \cite{ShSo}, to classify the $n-1$ homology with $G$ coefficients, where $G$ is an Abelian group, given certain Bakry-\'Emery Ricci curvature constraints.

\begin{theorem}\label{thm:main theorem with Abelian group}

Let $M^n$ be a complete noncompact manifold with either of the following: 
\begin{enumerate}
    \item $\ric_X^N\geq 0$ with $N>n$.
    \item $\ric_\phi^N\geq 0$ with $\phi$ bounded above and $N\leq 1$ or $N=\infty$.
    \item $\ric_\phi^N\geq 0$ with $N=\infty$ and $\nabla\phi\rightarrow 0$ at $\infty$.
\end{enumerate}

Then we have the following cases:
\begin{enumerate}[label=(\roman*)]
    \item

    If $M^n$ has two or more ends and $G$ is an Abelian group, then 
    \[
  H_{n-1}(M,G) =
  \begin{cases}
                                   G & \text{if $M$ is orientable} \\
                                  \ker(G\overset{\times2}\to G) & \text{if $M$ is not orientable.}
  \end{cases}
\]
   
    \item If $M^n$ is one-ended with the loops to infinity property, then $H_{n-1}(M,G)=0$.
    \item If $M^n$ is one-ended and doesn't have a ray with the loops to infinity property, and G is an Abelian group, then 
     \[
  H_{n-1}(M,G) =
  \begin{cases}
                                   G & \text{if $M$ is not orientable} \\
                                  \ker(G\overset{\times2}\to G) & \text{if $M$ is orientable.}
  \end{cases}
\]

\end{enumerate}
\end{theorem}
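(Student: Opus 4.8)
The plan is to handle the three curvature hypotheses uniformly. Each of (1)--(3) is exactly the hypothesis needed so that the Splitting Theorem holds whenever $M$ contains a line (through Proposition \ref{prop:FLZ} and the $\ric_X^N$ versions) and so that the dichotomy of Theorem \ref{theorem:mainlemmashowsupinthistheorem} is available. Once these two tools are invoked the computation of $H_{n-1}(M,G)$ is a purely topological matter, which I would organize along the three cases of the statement following Shen--Sormani \cite{ShSo}.

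For case (i), I would first note that having two or more ends produces a line joining two of them, so the Splitting Theorem gives $M\cong\mathbb{R}^k\times N$ with $N$ line-free. Since $\mathbb{R}^k\times N$ is one-ended as soon as $k\geq 2$ or $N$ is noncompact, at least two ends forces $k=1$ and a \emph{compact} cross-section $N$. The projection is a homotopy equivalence $M\simeq N$, so $H_{n-1}(M,G)\cong H_{n-1}(N,G)$ is the top homology of the closed connected $(n-1)$-manifold $N$, namely $G$ when $N$ is orientable and $\ker(G\overset{\times2}\to G)$ when it is not. As $M=\mathbb{R}\times N$ is orientable if and only if $N$ is, this is precisely the asserted formula.

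For cases (ii) and (iii) I would pass to cohomology via Poincar\'e--Lefschetz duality, $H_{n-1}(M,G)\cong H^1_c(M,\mathcal{G})$, with $\mathcal{G}$ the coefficient system twisted by the orientation character (untwisted when $M$ is orientable), and use that the end exact sequence makes $H^1_c(M,\mathcal{G})\to H^1(M,\mathcal{G})$ injective for one-ended $M$. In case (ii) a class in $H^1$ is a homomorphism out of $\pi_1(M)$, and if $g$ has the geodesic loops to infinity property a representative loop may be pushed outside the compact support of the class, so the pairing vanishes; since every element escapes, the homomorphism is trivial, the class dies, and $H_{n-1}(M,G)=0$. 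In case (iii), Theorem \ref{theorem:mainlemmashowsupinthistheorem}(ii) supplies a $g$ failing the property, hence a double cover lifting $\gamma$ to a line; splitting gives $\widetilde{M}\cong\mathbb{R}\times N$, and to keep $M$ one-ended the deck involution must reverse the line, $\sigma(t,x)=(-t,\alpha(x))$ with $\alpha$ a fixed-point-free involution of the (compact) $N$. Then $M$ is the mapping cylinder of the double cover $N\to P:=N/\alpha$, so $M\simeq P$ and $H_{n-1}(M,G)\cong H_{n-1}(P,G)$; tracking how $\sigma$ acts on orientations shows $M$ is orientable exactly when $P$ is \emph{not}, which yields the swapped formula in the statement.

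The main obstacle I anticipate is the bookkeeping for a general Abelian group $G$ together with non-orientability, concentrated in case (ii): the pairing argument is transparent over $\mathbb{Z}$ or a field, but for arbitrary $G$ one must run it through the twisted duality $H_{n-1}(M,G)\cong H^1_c(M,\mathcal{G})$ and check that no universal-coefficient contribution survives, since $H_{n-1}(M,\mathbb{Z})=0$ does not by itself force $H_{n-1}(M,G)=0$ because of possible $\mathrm{Tor}$ with $H_{n-2}$. The second delicate point is justifying compactness of the cross-section $N$ in case (iii); this is exactly the geometric input that separates the nonzero answer there from the vanishing in case (ii).
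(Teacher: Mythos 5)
Your overall architecture is the same as the paper's: split off a compact cross-section when there are two or more ends, run the Shen--Sormani topological argument in the one-ended case with the loops to infinity property, and use the failure of that property to produce the split double cover in the remaining case. The difference is one of packaging: the paper delegates cases (ii) and (iii) wholesale to \cite[Proposition~2.1]{ShSo} and \cite[Proposition~3.3]{ShSo} (via Corollary \ref{corollary:flatnormalbundle} and Sormani's soul-type theorem \cite[Theorem~1.2]{So}, which present $M$ as a flat normal bundle over a compact totally geodesic soul), whereas you reconstruct those arguments directly --- the compactly supported Poincar\'e duality pairing in (ii), and the explicit description of $M$ as $(\mathbb{R}\times N)/\sigma\simeq N/\alpha$ in (iii). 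Your version of (iii) is essentially the same object viewed from the double cover rather than from the soul, and your flagged worries (Tor terms for general $G$ in (ii); compactness of the cross-section in (iii)) are exactly the points the cited propositions of \cite{ShSo} are designed to handle. One caution: your claim in (iii) that ``tracking how $\sigma$ acts on orientations shows $M$ is orientable exactly when $P$ is not'' is only immediate when the cross-section $N$ of the split double cover is orientable; if $N$ were non-orientable then both $M$ and $P=N/\alpha$ would be non-orientable and the dichotomy would need a separate argument, so this step should either rule that situation out or be routed through \cite[Proposition~3.3]{ShSo} as the paper does. Also note that in the $N=1$ curvature case the splitting used in case (i) is only the warped product splitting of Theorem \ref{thm: wylabel2}, not an isometric product; this does not affect your argument since you only use the resulting homotopy equivalence $M\simeq N$, but it is why the paper treats that case separately.
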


In Section \ref{section:splitting} and \ref{section:homology}, we will give examples to show that our results in Theorem \ref{thm:1} are optimal. In Example \ref{ex:spherecrossreal}, we construct an example that satisfies the following: $\ric_\phi^N>0$ for $N\leq 1$ or $N=\infty$, $\phi$ is unbounded, and $H_{n-1}(M,\mathbb{Z})=\mathbb{Z}$. In Example \ref{ex:hyperbolicspace}, we will give an example where $\ric_\phi^N>0$ for $N=\infty$, $\nabla\phi$ doesn't converge to $0$ at $\infty$, and the Splitting Theorem does not hold. Finally, in Example \ref{ex:flz_homology_equals_z}, we will construct an example where $\ric_\phi^\infty>0$, $\nabla\phi$ is bounded, $\nabla\phi$ doesn't converge to 0 at $\infty$, and $H_{n-1}(M,\mathbb{Z})=\mathbb{Z}$. 
\vspace{1em}

In Section \ref{section:splitting}, we will review the Splitting Theorems for $\ric_X^N\geq0$ using work by Wylie in \cite{Wy}, Khuri-Woolgar-Wylie in \cite{KhWoWy}, Munteanu-Wang in \cite{MunteanuWang}, and Fang-Li-Zhang in \cite{FLZ}. 
%For example, in Proposition \ref{prop:FLZ}, we will show using a remark by Fang-Li-Zhang in \cite{FLZ} that if $\ric_\phi\geq 0$ and $\nabla\phi\rightarrow 0$, then the Splitting Theorem holds.

\vspace{1em} 

In Section \ref{section:homology}, we review the proofs of Shen-Sormani \cite{ShSo} and Carron-Pedon \cite{CarPed} to classify the $n-1$ integral homology of non-compact manifolds with nonnegative Ricci curvature. Then, we prove the orientable case of Theorem \ref{thm:2} using Carron-Pedon's method of proof.
\vspace{1em}

In Section \ref{section:loopstoinfinity}, we give a sketch of Sormani's Line Theorem. We also prove our main lemma, Lemma \ref{lemma: mainresult}. We use this to prove Theorem \ref{theorem:mainlemmashowsupinthistheorem} and Theorem \ref{thm:2} in the non-orientable case. In Theorem \ref{thm:main theorem with Abelian group}, we also prove a more general version of Theorem \ref{thm:2} for $H_{n-1}(M,G)$ where $G$ is an Abelian group. 
\vspace{1em}

\section{The Splitting Theorem for Spaces with Nonnegative Bakry \'Emery Ricci Curvature}\label{section:splitting}
In this section we will describe the different versions of the Cheeger-Gromoll Splitting Theorem for the $N$-Bakry \'Emery Ricci curvature.
\vspace{1em} 

The $\ric_X^N\geq 0$ assumption becomes a weaker hypothesis as $N$ increases. $N >n$ is our strongest premise and the Splitting Theorem holds with no further assumptions, as shown by Khuri-Woolgar-Wylie in \cite[Theorem~2]{KhWoWy} and Fang-Li-Zhang in \cite[Theorem~1.3]{FLZ}. $N<1$ or $N=\infty$ is a weaker premise and the Splitting Theorem does not hold in general; however, Wylie showed that including the additional assumptions $X=\nabla \phi$ and $\phi<K$ for $K$ constant gives a splitting \cite[Corollary~1.3]{Wy}. Munteanu-Wang also showed in \cite[Theorem~1.6]{MunteanuWang} that if $N=\infty$ and $X=\nabla \phi$ where $\phi$ has linear growth with a weighted entropy condition, then the Splitting Theorem holds or $M$ is connected at infinity. If $N=1$ the Splitting Theorem does not hold, even when $\phi$ is bounded. However, if $X=\nabla \phi$ with $\phi<K$, then Wylie showed in \cite[Theorem~1.2]{Wy} that there is a more general warped product splitting. Here, we say that $(M,g)$ has a warped product splitting if $M$ is diffeomorphic to $\mathbb{R}\times L$ where $L$ is an $(n-1)$-dimensional manifold and there exists $u:\mathbb{R}\rightarrow \mathbb{R}^+$ such that $g=dr^2+u^2(r)g_0$ for a fixed metric $g_0$. We call $g$ a warped product over $\mathbb{R}$.
\vspace{1em} 

Using a remark by Fang-Li-Zhang, we can show that if $\ric_\phi\geq 0$ and $\nabla \phi\rightarrow 0$ at $\infty$, the Splitting Theorem holds \cite[Remark~3.1]{FLZ}. This is interesting because unlike the generalization of Myers' Theorem, which says if $\ric_\phi^\infty\geq \lambda g>0$, and if $|\nabla \phi|$ is bounded, then $M^n$ is compact \cite{LopezRio}, we need $\nabla\phi\rightarrow 0$ at $\infty$ rather than $|\nabla\phi|$ bounded in order for the Splitting Theorem to hold for $\ric_\phi^\infty$.
\vspace{1em}

The following table summarizes the known versions of the Splitting Theorem for $\ric_X^N\geq 0$:
\vspace{1em}
\begin{center}
\fbox{
  \parbox{35em}{

\begin{center}
    \underline{If $\ric_X^N\geq 0$, then:}
\end{center}
\begin{equation*}
\begin{array}{lcl}
             N>n & \Rightarrow & \text{Splitting Theorem}\text{, \cite{KhWoWy},\cite{FLZ}}\\
  N<1\text{, }X=\nabla\phi\text{, } \phi<K & \Rightarrow &\text{Splitting Theorem}\text{, \cite{Wy}} \\
  N=\infty\text{, }X=\nabla\phi\text{, }\phi<K & \Rightarrow & \text{Splitting Theorem}\text{, \cite{FLZ}}\\
  N=1\text{, }X=\nabla\phi\text{, }\phi<K & \Rightarrow & \text{Warped Product Splitting}\text{,  \cite{Wy}}\\
  N=\infty\text{, } X=\nabla\phi\text{, }\nabla\phi\rightarrow 0\text{ at }\infty & \Rightarrow & \text{Splitting Theorem, \cite{FLZ}}
       \end{array}
\end{equation*}
  }
}  
\end{center}
\vspace{1em}

Before we state our next theorem, we will show that if $\ric_\phi\geq 0$ and $\nabla\phi\rightarrow 0$, then the Splitting Theorem holds. 
%According to a remark by Fang-Li-Zhang, if $\ric_\phi^\infty\geq 0$ and if $\phi$ satisfies the condition $\displaystyle\lim_{\rho\rightarrow\infty}\frac{1}{\rho^2}\int_0^\rho \phi(\gamma(s))ds\leq 0$, where $\rho$ is the distance function and $\gamma$ is any ray, then the Splitting Theorem holds \cite[Remark~3.1]{FLZ}. We now aim to show that if we replace the integral condition with $\nabla\phi\rightarrow 0$, then the Splitting Theorem holds.
\vspace{1em}

\begin{proof}[Proof of Proposition \ref{prop:FLZ}]

Suppose $\nabla\phi\rightarrow 0$ at $\infty$. Then, for each $\varepsilon>0$, there exists $R>0$ such that for all $x\in M\setminus\overline{B(\gamma(0),R)}$, $|\nabla\phi|(x)< \varepsilon$. 
\vspace{1em}

Let $\gamma(t)$ be a unit speed ray. Then, 
\[
\phi'(\gamma(t))=\langle \nabla\phi,\dot\gamma\rangle\leq|\nabla\phi|\leq\varepsilon.
\]

 After integrating, for the same $\varepsilon>0$ and $R>0$, we get $\phi(\gamma(t))<\varepsilon t+C$, where $C$ is a constant. Then,
\vspace{1em}

\begin{align*}
\displaystyle\lim_{\rho\rightarrow\infty}\frac{1}{\rho^2}\int_0^\rho \phi(\gamma(s))ds&=\displaystyle\lim_{\rho\rightarrow\infty}\frac{1}{\rho^2}\left(\int_0^R \phi(\gamma(s))ds+\int_R^\rho \phi(\gamma(s))ds\right)\\
&<\displaystyle\lim_{\rho\rightarrow\infty}\left(\frac{1}{\rho^2}\left(\int_0^R \phi(\gamma(s))ds\right)+\frac{\varepsilon }{2}+\frac{C}{\rho}-\frac{\varepsilon R^2}{2\rho^2}-\frac{CR}{\rho^2}\right)\\
&=\displaystyle\frac{\varepsilon}{2}.
\end{align*}

Letting $\varepsilon\rightarrow 0$, we get
\begin{align*}
\displaystyle\lim_{\rho\rightarrow\infty}\frac{1}{\rho^2}\int_0^\rho \phi(\gamma(s))ds\leq 0.
\end{align*}
Thus, by \cite[Remark~3.1]{FLZ}, the Splitting Theorem holds.
\end{proof}
\vspace{1em}

Now, we will give an example where $\ric_\phi> 0$, $\nabla\phi$ is bounded, $\displaystyle\lim_{\rho\rightarrow\infty}\frac{1}{\rho^2}\phi(\gamma(s))ds$ is nonzero and finite, the Splitting Theorem doesn't hold, and $H_{n-1}(M,\mathbb{Z})=\mathbb{Z}$, rather than $0$.

\begin{ex}\label{ex:flz_homology_equals_z}
Let $M=\mathbb{R}\times S^{n-1}$, where our metric is $g=dr^2+\rho^2(r)g_N$. We wish to construct $\rho(r)$ and $\phi(r)$ such that $\ric_\phi>0$ everywhere and $\phi(r)$ and $\rho(r)$ are smooth.\\ 

Let $\rho$ be a function such that \[\begin{cases} 
\rho>0 & everywhere\\
|\dot\rho|<1 & everywhere\\
-C<\ddot\rho<0 & |r|>A\\
\end{cases} \]
where $A$ and $C$ are constants. Figure \ref{Figure:1} is an example of what $\rho$ might look like:

\begin{figure}[ht]
\centering

\includegraphics[width=0.4\textwidth]{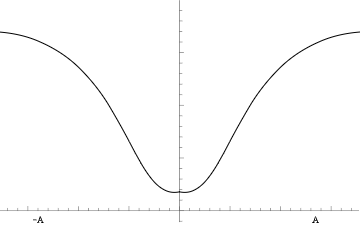}
\caption{$\rho(r)$}
\label{Figure:1}
\end{figure}

\vspace{1em}
Later in the example, we will consider $\varepsilon\rho$ where $\varepsilon>0$, so the space will look like a cylinder with a small dip around $0$. We proceed with our calculations:
\vspace{1em}

Let $V$ be a vector $TS^{n-1}$. Given our metric, $\ric_\phi(\frac{\partial}{\partial r},\frac{\partial}{\partial r})=-(n-1)\frac{\ddot\rho}{\rho}+\ddot \phi$ and $\ric_\phi (V,V)=(n-2)(1-\dot\rho^2)-\rho\ddot\rho+\dot \phi\dot\rho$ (See \cite{PP}, page 69).\\

On $|r|<A$, there exists a smooth function, $\alpha(r)$, larger than $(n-1)\displaystyle\frac{\ddot\rho}{\rho}$ on $|r|<A$ such that $\alpha(A)=\alpha(-A)=0$, because $-(n-1)\displaystyle\frac{\ddot\rho}{\rho}(\pm A)<0$.\\

Let $\phi$ be a function such that $\phi''(r)=\alpha(r)$.Then, $\ric_\phi(\frac{\partial}{\partial r},\frac{\partial}{\partial r})>0$ everywhere.\\

Now, consider $\varepsilon\rho$ in place of $\rho$ where $\varepsilon>0$.\\

Then we still get $\ric_\phi(\frac{\partial}{\partial r},\frac{\partial}{\partial r})=-(n-1)\frac{\ddot\rho}{\rho}>0$.\\

$\ric_\phi(V,V)=(n-2)(1-\varepsilon^2\dot\rho^2)-\varepsilon^3\rho\ddot\rho+\dot \phi\varepsilon^2\dot\rho\rho$. Letting $\varepsilon\rightarrow 0$, $\ric_\phi(V,V)\rightarrow n-2>0$ since $\dot \phi$ is bounded.\\ 

Finally, we have $\ric_\phi>0$ everywhere.\\

On $|r|>A$, $\phi''(r)=0$, which means $\phi(r)=Br+E$ on $|r|>A$.

Thus, $\displaystyle\lim_{t\rightarrow \infty}\frac{1}{t^2}\displaystyle\int_0^{t} \phi\circ\gamma dr=\displaystyle\lim_{t\rightarrow \infty}\displaystyle\frac{1}{t^2}\int_0^A \phi\circ\gamma(r)dr+\displaystyle\frac{1}{t^2}\int_A^t (Br+E)dr=\displaystyle\frac{B}{2}$. 
\end{ex}
\vspace{1em}

\section{Codimension One Homology of Noncompact Spaces with Nonnegative Bakry \'Emery Ricci Curvature}\label{section:homology}
In this section, we will generalize the proofs of Shen-Sormani \cite{ShSo} and Carron-Pedon \cite{CarPed} in order to classify $H_{n-1}(M,\mathbb{Z})$ for noncompact spaces with nonnegative Bakry \'Emery Ricci curvature. We will also give examples to show that our results are optimal.
\vspace{1em}

First, we define what it means to be a one-ended manifold.
\vspace{1em}

\begin{defn}
A manifold is one-ended if given any compact set $A\subset M$, $M\setminus A$ has only one unbounded connected component. See Figure \ref{fig:oneendedmanifold}.
\end{defn}
\vspace{1em}

\begin{figure}[ht]
    \centering
    \includegraphics[width=0.55\textwidth, height=.1\textheight]{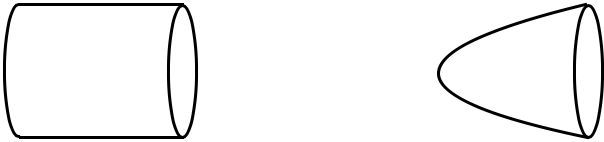}
    \caption{The left image shows a two-ended manifold and the right image shows a one-ended manifold.}
    \label{fig:oneendedmanifold}
\end{figure}

Next, we review a proposition by Carron and Pedon, which we will use to prove Theorem \ref{thm:2} in the orientable case. 
\vspace{1em}
\begin{prop}\cite[Proposition~5.2]{CarPed}\label{prop:carronandpedon}
If $M^n$ is an orientable open manifold having one end, and if every twofold normal covering of $M$ also has one end, then $H_{n-1}(M,\mathbb{Z})=0$.

\end{prop}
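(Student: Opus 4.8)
The plan is to pass to cohomology via Poincar\'e duality and then manufacture a twofold cover with extra ends whenever $H_{n-1}(M,\mathbb{Z})\neq 0$. Since $M$ is orientable and open, Poincar\'e duality for the oriented $n$-manifold $M$ gives $H_{n-1}(M,\mathbb{Z})\cong H^1_c(M,\mathbb{Z})$, where $H^1_c$ denotes compactly supported cohomology and $H^1_c(M,\mathbb{Z})=\varinjlim_K H^1(M,M\setminus K)$ over a compact exhaustion $\{K\}$. Taking the direct limit of the long exact sequences of the pairs $(M,M\setminus K)$ yields
\[
0\to H^0_c(M)\to H^0(M)\to \varinjlim_K H^0(M\setminus K)\to H^1_c(M)\to H^1(M)\xrightarrow{\,r\,}\varinjlim_K H^1(M\setminus K).
\]
Because $M$ is connected and noncompact, $H^0_c(M)=0$ and $H^0(M)=\mathbb{Z}$; because $M$ has one end, $\varinjlim_K H^0(M\setminus K)=\mathbb{Z}$ and $H^0(M)\to\varinjlim_K H^0(M\setminus K)$ is an isomorphism. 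Exactness then forces $H^1_c(M)\to H^1(M)$ to be injective with image exactly $\ker r$, so that $H_{n-1}(M,\mathbb{Z})\cong\ker\big(r\colon H^1(M)\to\varinjlim_K H^1(M\setminus K)\big)$.

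Next I would exploit that $H^1(\,\cdot\,;\mathbb{Z})\cong\operatorname{Hom}(H_1(\,\cdot\,),\mathbb{Z})$ is always torsion free, hence so is the direct limit $\varinjlim_K H^1(M\setminus K)$. Consequently $\ker r$ is a pure (saturated) subgroup of the torsion-free group $H^1(M)=\operatorname{Hom}(\pi_1(M),\mathbb{Z})$. Arguing by contradiction, assume $H_{n-1}(M,\mathbb{Z})\neq 0$ and pick a nonzero $\beta\in\ker r$. Writing $\beta(\pi_1(M))=m\mathbb{Z}$, the homomorphism $\beta'=\tfrac1m\beta\colon\pi_1(M)\to\mathbb{Z}$ is surjective, and purity of $\ker r$ guarantees $\beta'\in\ker r$ as well. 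Thus I obtain a surjective homomorphism $\beta'\colon\pi_1(M)\to\mathbb{Z}$ which vanishes at infinity, in the sense that $\beta'$ restricts to $0\in H^1(M\setminus K)$ for all sufficiently large compact $K$.

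Finally I would reduce mod $2$. The composite $\rho=\beta'\bmod 2\colon\pi_1(M)\to\mathbb{Z}/2$ is surjective, so it defines a connected twofold (necessarily normal) covering $\pi\colon\widehat M\to M$. Choosing $K$ compact large enough that $M\setminus K$ is connected and unbounded (possible as $M$ is one-ended) and that $\beta'|_{M\setminus K}=0$, the restriction $\rho\circ i_*$ on $\pi_1(M\setminus K)$ vanishes, so the cover trivializes over $M\setminus K$: the set $\pi^{-1}(M\setminus K)$ is two disjoint copies of $M\setminus K$, each unbounded. Since $\pi^{-1}(K)$ is compact, $\widehat M$ therefore has at least two ends, contradicting the hypothesis that every twofold normal covering of $M$ has one end. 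Hence $\ker r=0$ and $H_{n-1}(M,\mathbb{Z})=0$. I expect the main obstacle to be the passage from an arbitrary nonzero class to a \emph{primitive} one: only by dividing out the divisibility $m$ (and invoking purity to remain inside $\ker r$) is the mod-$2$ reduction nonzero, which is exactly what is needed to produce a \emph{connected} twofold cover; verifying that this cover genuinely acquires a second end then rests on the at-infinity vanishing of $\beta'$.
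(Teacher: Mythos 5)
Your argument is correct, but there is nothing in the paper to compare it against: the proposition is quoted verbatim from Carron--Pedon \cite[Proposition~5.2]{CarPed} and used as a black box, with no proof reproduced. What you have written is essentially the standard argument (and, as far as I can tell, Carron--Pedon's own): Poincar\'e duality identifies $H_{n-1}(M,\mathbb{Z})$ with $H^1_c(M,\mathbb{Z})$; the one-end hypothesis together with the direct limit of the long exact sequences of the pairs $(M,M\setminus K)$ identifies $H^1_c(M,\mathbb{Z})$ with the subgroup of classes in $H^1(M,\mathbb{Z})\cong\operatorname{Hom}(\pi_1(M),\mathbb{Z})$ that vanish near infinity; and a nonzero such class, made primitive by dividing by its divisibility (legitimate because $\ker r$ is saturated inside a torsion-free group), reduces mod~$2$ to a surjection $\pi_1(M)\to\mathbb{Z}/2$ whose associated connected double cover --- automatically normal, being of index two --- trivializes over the end and therefore has two ends, contradicting the hypothesis. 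You correctly isolate the one genuinely delicate step, namely that primitivity is what makes the mod-$2$ reduction nonzero and hence the cover connected. Two minor points to make explicit in a polished write-up, neither of which is a gap: when trivializing the cover you should restrict to the unique unbounded component $U$ of $M\setminus K$ (bounded components may be present even for large $K$; the restriction of $\beta'$ to $U$ still vanishes, and $\pi^{-1}(U)$ then supplies two unbounded components of $\widehat M\setminus\pi^{-1}(K)$, which is exactly what ``at least two ends'' requires); and you should note that $\varinjlim_K H^0(M\setminus K)\cong\mathbb{Z}$ because the bounded components of $M\setminus K$ are eventually absorbed by larger compacta, so only the single end contributes to the limit.
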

\vspace{1em}

We will now prove Theorem \ref{thm:2} for $M^n$ orientable. To do this, we will follow \cite[Proposition~5.3]{CarPed} in using Proposition \ref{prop:carronandpedon}. Later, we will prove Theorem \ref{thm:2} in the $M^n$ non-orientable case by using the loops to infinity property.

%\begin{corollary}\label{cor:integralhomology}
%\color{red} Let $M^n$ be a complete orientable Riemannian manifold and suppose one of the following holds:
%\begin{enumerate}
%    \item $\ric_X^N\geq 0$ with $N>n$.
%    \item $\ric_\phi^N\geq 0$ with $N=\infty$, $\phi$ bounded above.
%    \item $\ric_\phi^N\geq 0$ with $N\leq 1$ and $\phi$ bounded above.
%    \item $\ric_\phi^N\geq 0$ with $N=\infty$, $\nabla\phi\rightarrow 0$ at $\infty$.
%\end{enumerate}
%Then $H_{n-1}(M,\mathbb{Z})=0$ or $\mathbb{Z}$.
%\end{corollary}

\begin{proof}[Proof of Theorem \ref{thm:2} (orientable case)]
Suppose $M$ is one-ended and suppose every double cover of $M$ is one-ended. By Proposition \ref{prop:carronandpedon}, $H_{n-1}(M,\mathbb{Z})=0$. 
\vspace{1em} 

%Suppose $M$ is non-orientable and one-ended and suppose every double cover of $M$ is one-ended.(??????????????????????????????????????????????????)
%\vspace{10em}

Suppose $M$ is one-ended and there exists a double cover, $\widetilde{M}$, which is two-ended. Then $\widetilde{M}$ splits isometrically as $\widetilde{L}\times\mathbb{R}$, where $\widetilde{L}$ is compact by the Splitting Theorem. Let $h$ be the nontrivial deck transformation acting on $\widetilde{M}$.  Then $H_{n-1}(M,\mathbb{Z})= H_{n-1}(M,\mathbb{Z})=H_{n-1}\bigg(\displaystyle\frac{\widetilde{L}\times\mathbb{R}}{\langle h\rangle},\mathbb{Z}\bigg)$. Since $M$ is one-ended, $M$ is orientable if and only if $\widetilde{L}$ is non-orientable. Thus, $H_{n-1}(M,\mathbb{Z})=H_{n-1}\bigg(\displaystyle\frac{\widetilde{L}\times\mathbb{R}}{\langle h\rangle},\mathbb{Z}\bigg)=0$.
\vspace{1em}

Suppose $M$ is two ended. By the Splitting Theorem, $M$ is isometric to $L\times \mathbb{R}$ where $L$ is compact and has the same orientability as $M$. Then, since $M$ is orientable, $H_{n-1}(M,\mathbb{Z})=H_{n-1}(N,\mathbb{Z})=\mathbb{Z}$.
\end{proof}
\vspace{1em}

 In the following example, we will give a space and metric where $\ric_\phi^N>0$ for $N=\infty$ and $N\leq 1$, $\phi$ is unbounded, and $H_{n-1}(M,\mathbb{Z})=\mathbb{Z}$.
\begin{ex}\label{ex:spherecrossreal}
Consider $M= S^{n-1}\times \mathbb{R}$.\\

Let $\phi: M^n\rightarrow \mathbb{R}$, $\phi(r)=r^2$.\\

In the $S^{n-1}$ direction, $\ric>0$ and $\hess\phi=0$. In the $\mathbb{R}$ direction, $\ric=0$ and $\hess>0$.\\ 

If $N=\infty$ or $N\leq 1$, then $-\displaystyle\frac{1}{N-n}\nabla\phi^*\otimes \nabla\phi^*\geq 0$.\\ 

Therefore, $\ric_\phi^N>0$ for $N=\infty$ and $N\leq 1$. However, $H_{n-1}(S^{n-1}\times \mathbb{R},\mathbb{Z})=H_{n-1}(S^{n-1},\mathbb{Z})=\mathbb{Z}$. Notice that $\phi$ is unbounded.

\end{ex}
\vspace{1em}

Observe that in Example \ref{ex:spherecrossreal}, the Splitting Theorem does hold. The next example, which Wei-Wylie constructed in \cite[Example~2.2]{WeiWylie}, satisfies $\ric_\phi^\infty>0$ and $\displaystyle\lim_{\rho\rightarrow\infty}\frac{1}{\rho^2}\int_0^\rho \phi(\gamma(s))ds=\infty$, yet the Splitting Theorem does not hold. 

\begin{ex} \label{ex:hyperbolicspace}
Consider $M^n=\mathbb{H}^n$. Fix $p\in M$. Let $\phi(x)=(n-1)d(x,p)^2$, where $d(x,p)$ is the distance to $p$. Then, $\ric_\phi\geq 0$ and the Splitting Theorem does not hold, as in \cite[Example~2.2]{WeiWylie}. Also, if $\gamma(0)=p$, then\\

$\displaystyle\lim_{\rho\rightarrow\infty}\frac{1}{\rho^2}\int_0^\rho \phi(\gamma(s))ds$\\

$=\displaystyle\lim_{\rho\rightarrow \infty}\frac{1}{\rho^2}\int_0^\rho(n-1)d(\gamma(s),\gamma(0))^2ds$\\

$=\displaystyle\lim_{\rho\rightarrow\infty}(n-1)\displaystyle\frac{\rho}{3}=\infty$.
\end{ex}
\vspace{1em}

\section{Geodesic Loops to Infinity Property}\label{section:loopstoinfinity}
We will present the proof of Sormani's Line Theorem, which, along with the Splitting Theorem for $\ric_X^N\geq 0$, allows us to prove our main lemma, Lemma \ref{lemma: mainresult}. We then prove Theorem \ref{thm:2} in the non-orientable case. We are ready to present Sormani's Line Theorem.
\vspace{1em}

\begin{theorem}\cite[Theorem~1.7]{So}\label{thm: linetheorem}
If $M^n$ is a complete non-compact manifold which does not satisfy the geodesic loops to infinity property, then there is a line in its universal cover. 
\end{theorem}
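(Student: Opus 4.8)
The plan is to work in the universal cover $\widetilde M$ and manufacture a line out of the minimal geodesics that witness the failure of the loops to infinity property. By hypothesis there is a ray $\gamma$, an element $g\in\pi_1(M,\gamma(0))$, and a compact set $A\subset M$ so that no loop lying in $M\setminus A$ is homotopic to a representative $C$ of $g$ along $\gamma$. I would fix a lift $\widetilde\gamma$ of $\gamma$ based at a point over $\gamma(0)$, let $g$ also denote the corresponding deck transformation, and for each $i\in\mathbb{N}$ take a minimizing geodesic $\sigma_i$ in $\widetilde M$ from $\widetilde\gamma(i)$ to $g\,\widetilde\gamma(i)$.

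First I would check that the projection $\pi\circ\sigma_i$ is a loop based at $\gamma(i)$ that is homotopic to $C$ along $\gamma$. Concatenating $\widetilde\gamma|_{[0,i]}$, then $\sigma_i$, then the reverse of $g\,\widetilde\gamma|_{[0,i]}$ yields a path in $\widetilde M$ from the basepoint to its $g$-translate, so projecting it realizes exactly the element $g$ transported out to $\gamma(i)$ and back; this is the concatenation appearing in the definition of homotopic along $\gamma$. Since the loops to infinity property fails with compact obstruction $A$, every such loop must meet $A$, and therefore each $\sigma_i$ passes through a point $\widetilde p_i\in\pi^{-1}(A)$.

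Next I would renormalize. Choosing deck transformations $h_i$ that carry $\widetilde p_i$ into a fixed compact lift $\widetilde A$ of $A$, and reparametrizing $h_i\sigma_i$ by arclength so that $h_i\widetilde p_i$ sits at parameter $0$, I obtain minimizing geodesics all passing through the fixed compact set $\widetilde A$. The two subarcs of $h_i\sigma_i$ have lengths $d(\widetilde p_i,\widetilde\gamma(i))$ and $d(\widetilde p_i,g\,\widetilde\gamma(i))$; since $\widetilde p_i\in\pi^{-1}(A)$ and $\pi$ is a Riemannian covering, each of these is bounded below by $d_M(\gamma(i),A)$, which tends to $\infty$ because $\gamma(i)$ escapes every compact set. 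Hence the domains of the $h_i\sigma_i$ grow without bound on both sides of $0$.

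Finally, since the initial data of the $h_i\sigma_i$ at parameter $0$ lie in the unit tangent bundle over the compact set $\widetilde A$, a subsequence converges and Arzel\`a--Ascoli (or continuous dependence of geodesics on initial data) produces a limit geodesic $\sigma\colon\mathbb{R}\to\widetilde M$. The step I expect to demand the most care is confirming that $\sigma$ is a line: each $h_i\sigma_i$ is minimizing on an interval eventually containing any fixed $[a,b]$ and converges uniformly on compacta to $\sigma$, so a shortcut for $\sigma$ on $[a,b]$ would yield a shortcut for $h_i\sigma_i$ for large $i$, contradicting minimality; thus $\sigma$ minimizes between each pair of its points and is a line. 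The other delicate point is the bookkeeping of the previous paragraphs---verifying that $\pi\circ\sigma_i$ is homotopic to $C$ along $\gamma$ in the precise sense of the definition and that the isometries $h_i$ keep both subarcs escaping to infinity---but these are routine checks rather than genuine obstacles.
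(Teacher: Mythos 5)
Your proposal is correct and follows essentially the same route as the paper's proof: lift the witnessing loops to minimizing geodesics $\widetilde C_i$ between $\widetilde\gamma(r_i)$ and its $h$-translate, use the compact obstruction $A$ to find points of these geodesics in $\pi^{-1}(A)$, renormalize by deck transformations into a fixed compact lift $\widetilde A$, and extract a subsequential limit whose minimality on arbitrarily long intervals makes it a line. Your explicit lower bound $d(\widetilde p_i,\widetilde\gamma(i))\geq d_M(\gamma(i),A)\to\infty$ for the two subarcs is exactly the ``computational detail'' the paper omits (it uses the equivalent bound $r_i-R$ with $A\subset B_{\gamma(0)}(R)$), so nothing is missing.
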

%\vspace{1em}

\begin{proof}
Since $M^n$ is a complete, non-compact manifold, there exists a ray, $\gamma: [0,\infty)\rightarrow M^n$. Let $h\in\pi_1(M,\gamma(0))$ which does satisfy the loops to infinity property, and let $C$ be a representative of $h$ based at $\gamma(0)$. Because $h$ doesn't satisfy the loops to infinity property, there exists a compact set $A\subset M$ such that any loop homotopic to $C$ along $\gamma$ intersects $A$. Let $R>0$ such that $A\subset B_{\gamma(0)}(R)$. Let $\{r_i\}$ be a sequence such that $\displaystyle\lim_{i\to\infty}r_i=\infty$ and $r_i>R$ for all $i$. 
\vspace{1em}

Now, let $\widetilde{M}$ be the universal cover of $M$, and let $\pi:\widetilde{M}\rightarrow M$ be the covering map. Identifying loops in $\pi_1(M,\gamma(0))$ with deck transformations, let $\widetilde{\gamma}$ and $h\circ\widetilde{\gamma}$ be lifts of $\gamma$ starting at $\widetilde{\gamma}(0)$ and $h\circ\widetilde{\gamma}(0)$ respectively, and let $\widetilde{C}$ be the lift of $C$, starting at $\widetilde{\gamma}(0)$ and ending at $h\circ\widetilde{\gamma}(0)$. If $\widetilde{C_i}$ are minimal geodesics from $\widetilde{\gamma}(r_i)$ to $h\circ\widetilde{\gamma}(r_i)$, then, $C_i:=\pi(\widetilde{C}_i)$ is a loop based at $\gamma(r_i)$ which is homotopic to $C$ along $\gamma$.
\vspace{1em}

Let $L_i=L(\widetilde{C}_i)=L(C_i)=d_{\widetilde{M}}(\widetilde{\gamma}(r_i),h\circ\widetilde{\gamma}(r_i))$. For each $C_i$, there exists some $t_i\in [0,L_i]$ such that $C_i(t_i)\subset A$.
\vspace{1em}

Let $\widetilde{A}$ be the lift of $A$ to the fundamental domain in $\widetilde{M}$. For all $i\in \mathbb{N}$, there exists $h_i\in\pi_1(M,\gamma(0))$, so that $h_i\circ\widetilde{C}_i(t_i)\in\widetilde{A}$.
\vspace{1em}

Through some computational details which we will omit, (See \cite[Theorem~1.7]{So} for more details), $h_i\circ\widetilde{C}_i$ are minimal geodesics from $(t_i-(r_i-R))$ to $(t_i+(r_i-R))$ such that $h_i\circ\widetilde{C}_i(t_i)\in\widetilde{A}$. Letting $r_i\rightarrow \infty$, a subsequence of $h_{i*}\circ\widetilde{C}_i'(t_i)$ converges to a vector $\gamma_\infty'(0)$, based at $\gamma_\infty(0)$ in the closure of $\widetilde{A}$. Let $\gamma_\infty$ be the geodesic with these initial conditions. Then $\gamma_\infty$ runs from $\displaystyle\lim_{i\to\infty}t_i-(r_i-R)=-\infty$ to $\displaystyle\lim_{i\to\infty}t_i+(r_i-R)=\infty$. Thus, we have constructed a line, namely $\gamma_\infty$, in $\widetilde{M}$.

\end{proof}
The following corollary follows from \cite{ShSo} and the generalizations of the Splitting Theorem.

\begin{corollary}
Let $M^n$ be a complete, noncompact Riemannian manifold, and suppose one of the following holds:
\begin{enumerate}
    \item $\ric_X^N\geq 0$ with $N>n$.
    \item $\ric_\phi^N\geq 0$ with $N=\infty$, $\phi$ bounded above.
    \item $\ric_\phi^N\geq 0$ with $N\leq 1$ and $\phi$ bounded above.
    \item $\ric_\phi^N\geq 0$ with $N=\infty$, $\nabla\phi\rightarrow 0$ at $\infty$.
\end{enumerate}
\vspace{1em}
Then, 

\begin{enumerate}[label=(\roman*)]
\item If $D$ be a precompact subset of $M$ and $\partial D$ is simply connected, then $\pi_1(D)$ can only contain elements of order 2. 
\item If $D$ be a precompact subset of $M$ with smooth boundary, where $\gamma$ is a ray such that $\gamma(0)\in D$ and if $S$ be any connected component of $\partial D$ containing a point $\gamma(a)$, then the image of the inclusion map
$$i_* : \pi_1(S, \gamma(a))\rightarrow  \pi_1(Cl(D), \gamma(a))$$
is $N \subset \pi_1(Cl(D), \gamma(a))$ such that $\pi_1(Cl(D), \gamma(a))/N$ contains at most two
elements.

\end{enumerate}

\end{corollary}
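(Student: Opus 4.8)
The plan is to prove both parts of the corollary by invoking Sormani's Line Theorem (Theorem~\ref{thm: linetheorem}) together with the Splitting Theorems for $\ric_X^N \geq 0$ reviewed in Section~\ref{section:splitting}. The guiding principle is that under any of hypotheses (1)--(4), whenever the universal cover $\widetilde M$ contains a line, the relevant splitting holds; and if a loop class fails the geodesic loops to infinity property, Theorem~\ref{thm: linetheorem} manufactures a line in $\widetilde M$. The combination forces rigid structure on $\pi_1$.

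For part (i): Suppose $\partial D$ is simply connected and $D$ is precompact. First I would argue that any nontrivial $g \in \pi_1(D)$ must fail the geodesic loops to infinity property, since a representative loop can be pushed off the compact set $\overline{D}$ only if it is homotopically trivial there (this is where precompactness of $D$ and simple-connectivity of $\partial D$ enter: a loop escaping to infinity would, by pushing across $\partial D$, become contractible). By Theorem~\ref{thm: linetheorem}, the failure of the loops to infinity property yields a line in $\widetilde M$, so under the chosen curvature hypothesis the appropriate Splitting Theorem applies and $\widetilde M$ (or a split double cover) splits as $\widetilde L \times \mathbb{R}$. Then I would invoke the structure from Theorem~\ref{theorem:mainlemmashowsupinthistheorem}(ii): any $h \in \pi_1$ failing the property satisfies $h_*(\widetilde\gamma'(t)) = \pm \widetilde\gamma'(t)$. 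The deck transformations must therefore act on the $\mathbb{R}$-factor as $\pm \mathrm{id}$, i.e. as a subgroup of $\mathbb{Z}/2$; combined with the simply connected boundary this forces every element of $\pi_1(D)$ to have order dividing $2$.

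For part (ii): Here $D$ has smooth boundary, $\gamma$ is a ray with $\gamma(0) \in D$, and $S$ is the boundary component through $\gamma(a)$. I would consider the image $N = i_*(\pi_1(S,\gamma(a)))$ inside $\pi_1(\mathrm{Cl}(D),\gamma(a))$ and study the quotient via the same line/splitting dichotomy. The idea is that classes in $\pi_1(\mathrm{Cl}(D))$ not lying in $N$ correspond to loops that genuinely wind around and hence fail the loops to infinity property; by Theorem~\ref{theorem:mainlemmashowsupinthistheorem}(i) either $g$ or $g^2$ has the property, so modulo $N$ every element squares into $N$. This gives $|\pi_1(\mathrm{Cl}(D),\gamma(a))/N| \leq 2$. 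The orientation sign $h_*(\widetilde\gamma') = \pm \widetilde\gamma'$ again distinguishes the trivial quotient from the order-two quotient.

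The main obstacle I anticipate is the topological bookkeeping connecting the \emph{boundary data} (simple-connectivity of $\partial D$ in (i), the inclusion $i_*$ in (ii)) to the loops to infinity property, since Sormani's theorem is phrased for the whole noncompact $M$ rather than for a precompact subdomain. The delicate step is verifying that a class failing to be contractible rel the boundary genuinely fails the loops to infinity property along a ray exiting $D$ — one must rule out the possibility that the obstruction to pushing the loop to infinity is an artifact of $\overline D$ rather than a global homotopical constraint. I expect this to require carefully choosing the ray $\gamma$ transverse to $S$ and tracking how representatives interact with $\partial D$, rather than any hard curvature estimate; the geometric input is entirely absorbed into the already-established Splitting Theorems and Theorem~\ref{theorem:mainlemmashowsupinthistheorem}.
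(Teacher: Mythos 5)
The paper offers no argument for this corollary at all: it is stated with the single sentence that it ``follows from \cite{ShSo} and the generalizations of the Splitting Theorem,'' i.e.\ the intended proof is that Sormani's corollaries to the Line Theorem carry over verbatim once the Cheeger--Gromoll splitting is replaced by the Bakry--\'Emery splittings of Section 2. You have correctly identified that structure --- the geometric input is entirely the splitting theorems plus the ``$g$ or $g^2$ has the loops to infinity property'' statement, and the rest is topology --- so at the level of ingredients your plan matches the intended derivation. For part (i), though, your argument takes an unnecessary detour and then stalls: once you know (via the cut-and-paste of a loops-to-infinity homotopy across the simply connected $\partial D$) that any class of $\pi_1(D)$ possessing the loops to infinity property is trivial, the corollary is immediate from Theorem \ref{theorem:mainlemmashowsupinthistheorem}(i): either $g$ or $g^2$ has the property, so in either case $g^2=e$. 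Your route through the Line Theorem, the splitting of $\widetilde M$, and the $\pm\mathrm{id}$ action on the $\mathbb{R}$-factor does not by itself say anything about $\pi_1(D)$, and your closing sentence (``combined with the simply connected boundary this forces every element of $\pi_1(D)$ to have order dividing $2$'') simply asserts the conclusion.

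Part (ii) contains a genuine gap. From ``either $g$ or $g^2$ has the loops to infinity property'' plus ``classes with the property lie in $N=i_*\pi_1(S,\gamma(a))$'' you conclude that every element of $\pi_1(\mathrm{Cl}(D),\gamma(a))$ squares into $N$, and then assert $|\pi_1(\mathrm{Cl}(D),\gamma(a))/N|\leq 2$. That implication is false: in $(\mathbb{Z}/2)^2$ every element squares into the trivial subgroup, yet the quotient has four elements. The index bound requires the second half of Theorem \ref{theorem:mainlemmashowsupinthistheorem}: if some class fails the property then \emph{every} $h$ satisfies $h_*(\widetilde\gamma')=\pm\widetilde\gamma'$, and $h\mapsto\pm1$ is a homomorphism to $\mathbb{Z}/2$ whose kernel is exactly the set of classes with the loops to infinity property along $\gamma$. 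That kernel is a subgroup of index at most two, and it is contained in $N$ because its elements can be slid along $\gamma$ past $\mathrm{Cl}(D)$ and hence homotoped into $S$; therefore $N$ itself has index at most two. (If no class fails the property, the quotient is trivial.) You mention the sign $h_*(\widetilde\gamma')=\pm\widetilde\gamma'$ only as an afterthought ``distinguishing the two cases,'' but it is in fact the indispensable step that converts ``exponent two in the quotient'' into ``at most two cosets.''
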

\vspace{1em}

\begin{corollary}\label{corollary:geodesicloopstoinfinity}
Let $M^n$ be a complete, noncompact Riemannian manifold, and suppose one of the following holds:
\begin{enumerate}
    \item $\ric_X^N\geq 0$ with $N>n$, and there exists a point $p\in M$ such that $(\ric_X^N)_p>0$.
    \item $\ric_\phi^N\geq 0$ with $N=\infty$, $\phi$ bounded above, and there exists a point $p\in M$ such that $(\ric_\phi^N)_p>0$.
    \item $\ric_\phi^N\geq 0$ with $N\leq 1$ and $\phi$ bounded above, and there exists a point $p\in M$ such that $(\ric_X^N)_p>0$.
    \item $\ric_\phi^N\geq 0$ with $N=\infty$, $\nabla\phi\rightarrow 0$ at $\infty$, and there exists a point $p\in M$ such that $(\ric_\phi^N)_p> 0$.
\end{enumerate}
\vspace{1em}
Then, $M^n$ has the geodesic loops to infinity property.

\end{corollary}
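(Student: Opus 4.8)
The plan is to argue by contradiction, using the dichotomy established in Theorem \ref{theorem:mainlemmashowsupinthistheorem}(ii) together with the fact that each of the four hypotheses listed here is exactly one of the cases in which the \emph{full} Splitting Theorem holds (not merely a warped product splitting). In particular the case $N=1$ has been deliberately excluded, so in every listed regime a line yields an honest isometric product splitting $\mathbb{R}\times L$ in which the density descends to the $L$-factor. The four curvature conditions are precisely those appearing in Theorem \ref{theorem:mainlemmashowsupinthistheorem}, so that theorem is available with no extra work.

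Suppose, for contradiction, that $M$ fails the geodesic loops to infinity property. Then there is a ray $\gamma$ and an element $g\in\pi_1(M,\gamma(0))$ which does not satisfy the loops to infinity property along $\gamma$. Part (ii) of Theorem \ref{theorem:mainlemmashowsupinthistheorem} then applies and produces a split double cover $\overline{M}$ of $M$ which lifts $\gamma$ to a line. Let $\pi:\overline{M}\to M$ be the covering projection; since $\pi$ is a local isometry and the vector field $X$ (respectively the function $\phi$) pulls back along $\pi$, the cover $\overline{M}$ satisfies the same Bakry--\'Emery curvature bound as $M$, and the lifted line forces the isometric splitting $\overline{M}=\mathbb{R}\times L$.

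The heart of the argument is the observation that this splitting forces $\ric_X^N(\partial_r,\partial_r)\equiv 0$ on all of $\overline{M}$, where $\partial_r$ is the unit field tangent to the $\mathbb{R}$-factor. Indeed, in an isometric product the ordinary Ricci curvature in the $\partial_r$-direction vanishes; and because the splitting theorems in question yield a density that is a function of the $L$-factor alone (equivalently, $X$ is tangent to $L$), both the $\tfrac12\Lie_X g$ and the $X^*\otimes X^*$ correction terms vanish when evaluated on $(\partial_r,\partial_r)$. Hence the weighted Ricci tensor is degenerate in the splitting direction at every point of $\overline{M}$.

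Finally I would extract the contradiction from the strict positivity hypothesis. The point $p\in M$ with $(\ric_X^N)_p>0$ has a preimage $\overline{p}\in\overline{M}$, and since $\pi$ is a local isometry preserving the density data, $(\ric_X^N)_{\overline{p}}>0$ as a positive-definite bilinear form. But $\partial_r|_{\overline{p}}$ is a nonzero vector on which this form vanishes, contradicting positive-definiteness. Therefore no such $g$ and $\gamma$ exist, and $M$ has the geodesic loops to infinity property. The main obstacle is the middle step: one must verify, case by case against the splitting theorems of Khuri--Woolgar--Wylie, Wylie, and Fang--Li--Zhang, that the conclusion is a genuine isometric product with $r$-independent density, so that the correction terms really do drop out in the $\partial_r$-direction. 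Once this is in hand, the pointwise contradiction with strict positivity at $\overline{p}$ is immediate.
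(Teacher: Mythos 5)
Your core mechanism is the same as the paper's: a failure of the loops to infinity property produces a line in a cover, the line forces a splitting, the splitting makes $\ric_X^N(\partial_r,\partial_r)$ vanish identically, and this contradicts strict positivity at (a preimage of) $p$. The difference is the routing. The paper argues directly that $M$ and its universal cover contain no lines (line $\Rightarrow$ splitting $\Rightarrow$ degeneracy in the split direction $\Rightarrow$ contradiction) and then invokes the contrapositive of Sormani's Line Theorem (Theorem \ref{thm: linetheorem}). You instead pass through Theorem \ref{theorem:mainlemmashowsupinthistheorem}(ii) to obtain a split double cover. That is logically admissible (no circularity), but it is a strictly heavier dependency: part (ii) of that theorem rests on Lemma \ref{lemma: mainresult}, which requires a two-sided bound $|\phi|\leq K$, whereas the corollary's hypotheses (2) and (3) only assume $\phi$ bounded above. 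The paper's route through the Line Theorem avoids importing that machinery. One small point in your favor: for hypothesis (4), the condition $\nabla\phi\rightarrow 0$ at infinity lifts cleanly to a finite (double) cover but not obviously to an infinite universal cover, so your choice of cover is actually the safer one there.

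There is, however, one concrete error: you assert that ``the case $N=1$ has been deliberately excluded,'' but hypothesis (3) reads $N\leq 1$, which includes $N=1$. For $N=1$ the available conclusion (Theorem \ref{thm: wylabel2}, Wylie) is only a warped-product splitting $g=e^{\frac{2f(r)}{n-1}}g_N+dr^2$, not an isometric product with $r$-independent density, so your middle step --- that the $\tfrac12\Lie_Xg$ and $X^*\otimes X^*$ corrections drop out on $(\partial_r,\partial_r)$ because $X$ is tangent to the cross-section --- does not apply as written, and the pointwise degeneracy argument does not close in that case. (The paper's own one-line proof elides this as well, but it does not claim the case is absent.) To repair your argument you would either need to restrict to $N<1$ in case (3) or supply a separate argument showing that a complete warped product over $\mathbb{R}$ with $\ric_\phi^1\geq 0$ still forces $\ric_\phi^1$ to degenerate somewhere, contradicting $(\ric_\phi^1)_p>0$.
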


\vspace{1em}

\begin{proof}
First, we will show that $M$ and its universal cover, $\widetilde{M}$, have no lines. Suppose for the sake of contradiction that $M$ contains a line. We saw earlier in the paper that each of the four premises gives us a version of the Splitting Theorem. Hence, $M=\mathbb{R}\times N$. However, $\ric_\phi^N(\frac{\partial}{\partial r},\frac{\partial}{\partial r})=0$, which is a contradiction, thus proving our claim.
\vspace{1em}

Ergo, by Sormani's Line Theorem, $M$ has the loops to infinity property.
\end{proof}
\vspace{1em}

Before we prove the next proposition, we will first show that there exist examples of Riemannian manifolds with $\ric_\phi^1\geq 0$ not satisfying loops to infinity property along a given ray $\gamma$ and universal cover which has a warped product splitting. 

\begin{ex} Let $\phi\in C^2$ be bounded with bounded first and second derivatives.  By \cite[Corollary~2.4]{Wy}, there exists $\lambda $ large enough so that $\ric_\phi^1\geq 0$ and $g=dt^2+e^{\frac{2\phi}{n-1}}S_\lambda^n$. Now consider $M=\bigslant{(\mathbb{R}\times S^n)}{G}$, where $G$ is the group generated by $h(t,x)=(a-t,-x)$ for any constant $a>0$. If we also assume $\phi(a-t)=\phi(t)$, then $h$ is an isometry and $(M,g,\phi)$ satisfies $\ric_\phi^1\geq 0$. $h$ does not have the loops to infinity property along $(t,0)=(-t,a)$, so $\bigslant{(\mathbb{R}\times S^n)}{G}$ satisfies all of the necessary properties.
\end{ex}
\vspace{1em}

Next, we recall the definition of a ray lying in the split direction and we state a theorem which we will use in our main result.

\begin{defn}

In a warped product splitting, $M^n=N\times \mathbb{R}$ with $g=e^{\frac{f(r)}{n-1}}g_N+dr^2$, we say that $\gamma$, a ray, lies in the split direction if $\gamma(r)=(x_0,r)$, where $x_0\in N$.
\end{defn}

\begin{theorem}\label{thm: wylabel12}\cite[Proposition~4.2]{Wy}
Consider a warped product metric of the form $g=dr^2+v^2(r)g_N$ where $v>0$ is bounded from above. Let $\gamma:(a,b)\rightarrow M$ be a unit speed minimizing geodesic in $M$ and write $\gamma(s)=(\gamma_1(s),\gamma_2(s))$, where $\gamma_1$ and $\gamma_2$ are projections in the factors $\mathbb{R}$ and $N$. Then:
\begin{itemize}
    \item[(1)] $\gamma_2$ is either constant or its image is a minimizing geodesic in $(N,g_N)$.
    \item[(2)] If $\gamma_2$ is not a constant and $\gamma$ is a line in $M$, then the image of $\gamma_2$ is a line in $N$.
\end{itemize}
\end{theorem}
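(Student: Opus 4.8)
The plan is to write out the geodesic equations for the warped product $g=dr^2+v^2(r)g_N$ and read off the behavior of each factor separately. Writing $\gamma(s)=(\gamma_1(s),\gamma_2(s))$ with $\gamma_1$ the $\mathbb{R}$-coordinate, I would compute the Christoffel symbols, indexing the $\mathbb{R}$-direction by $0$ and the $N$-directions by $i,j$: the only nonzero ones beyond those of $g_N$ are $\Gamma^0_{ij}=-v\dot v\,(g_N)_{ij}$ and $\Gamma^i_{0j}=(\dot v/v)\delta^i_j$ (alternatively one can cite O'Neill's warped-product geodesic lemma). This yields a base equation $\ddot\gamma_1=v\dot v\,g_N(\gamma_2',\gamma_2')$ and a fiber equation
$$\frac{D^N}{ds}\gamma_2'=-2\frac{\dot v(\gamma_1)}{v(\gamma_1)}\dot\gamma_1\,\gamma_2',$$
where $D^N/ds$ is the covariant derivative of $g_N$ along $\gamma_2$. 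Since the right-hand side is a scalar multiple of $\gamma_2'$, the curve $\gamma_2$ is a pregeodesic of $(N,g_N)$. Differentiating and using this relation, a short computation shows that $v^2|\gamma_2'|_{g_N}$ is constant along $\gamma$; call this constant $c$.

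Next I would split into the two cases of part (1). If $c=0$ then $g_N(\gamma_2',\gamma_2')\equiv0$, so $\gamma_2$ is constant. If $c>0$ then $|\gamma_2'|_{g_N}=c/v^2>0$ everywhere, so reparametrizing $\gamma_2$ by its $g_N$-arclength $\ell$ produces a unit-speed curve $\bar\gamma_2$; substituting $\gamma_2=\bar\gamma_2\circ\ell$ into the fiber equation and using $g_N(D^N\bar\gamma_2'/d\ell,\bar\gamma_2')=0$ (unit speed) forces $D^N\bar\gamma_2'/d\ell=0$, so $\bar\gamma_2$ is a genuine $g_N$-geodesic. To upgrade ``geodesic'' to ``minimizing,'' I would argue by contradiction on a subinterval $[s_1,s_2]$: if the image of $\gamma_2$ were not minimizing between $\gamma_2(s_1)$ and $\gamma_2(s_2)$, take a strictly shorter $g_N$-geodesic $\sigma$ between these points and form the competitor $\tilde\gamma(s)=(\gamma_1(s),\sigma(u(s)))$ in $M$ that keeps the same $r$-schedule $\gamma_1$ but runs along $\sigma$ with $u'(s)=\bigl(L_N(\sigma)/L_N(\gamma_2|_{[s_1,s_2]})\bigr)|\gamma_2'|_{g_N}$. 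Then the integrand $\sqrt{\dot\gamma_1^2+v^2(u')^2}$ is pointwise strictly smaller than $\sqrt{\dot\gamma_1^2+v^2 g_N(\gamma_2',\gamma_2')}$, so $L_M(\tilde\gamma)<L_M(\gamma|_{[s_1,s_2]})$, contradicting that $\gamma$ minimizes between its points. Hence the image of $\gamma_2$ is a minimizing $g_N$-geodesic, proving (1); note this half requires no bound on $v$.

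For part (2), $\gamma$ is a line, hence defined on all of $\mathbb{R}$ and minimizing between any two of its points, and $\gamma_2$ is nonconstant so $c>0$. Here the hypothesis $v\le V_0$ enters: $|\gamma_2'|_{g_N}=c/v^2\ge c/V_0^2>0$, so the total $g_N$-length $\int|\gamma_2'|_{g_N}\,ds$ diverges both as $s\to+\infty$ and as $s\to-\infty$. Therefore the arclength $\ell(s)$ is an increasing bijection of $\mathbb{R}$ onto $\mathbb{R}$, the reparametrized $\bar\gamma_2$ is a complete unit-speed $g_N$-geodesic, and by (1) it minimizes between any two of its points. Thus its image is a line in $N$, proving (2).

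The main obstacle I anticipate is the minimality step: it is immediate that $\gamma_2$ is a geodesic, but showing its image actually minimizes needs the careful length comparison above, where the reparametrization $u(s)$ must be chosen so the competitor is pointwise shorter in the $N$-direction while preserving both endpoints and the $r$-schedule. A secondary point to pin down is that boundedness of $v$ is exactly what forces the $N$-length to diverge — without it a nonconstant $\gamma_2$ coming from a line in $M$ could sweep out only a finite geodesic segment, so the hypothesis on $v$ cannot be removed.
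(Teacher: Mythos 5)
Your argument is correct: the Christoffel symbols and the resulting base and fiber geodesic equations are right, the conserved quantity $v^2\lvert\gamma_2'\rvert_{g_N}$ (equivalently $v^4 g_N(\gamma_2',\gamma_2')$, which is exactly the O'Neill fact the paper records as Remark \ref{remark:ON}) correctly yields both the constant/pregeodesic dichotomy in (1) and the divergence of the fiber arclength in (2), and your competitor $(\gamma_1(s),\sigma(u(s)))$ is set up so that it preserves both endpoints and the $r$-schedule while being pointwise strictly shorter, which is the one genuinely delicate step. The paper itself gives no proof of this statement --- it is quoted from \cite[Proposition~4.2]{Wy} --- but your route through the warped-product geodesic equations is essentially the standard one used there, and your observation that the upper bound on $v$ is needed only for part (2) is accurate.
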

\vspace{1em}

Next, we state remarks from \cite[Lemma~4.4]{Wy} and \cite[page~208, Remark~8]{ON} which we will use in the proof of Lemma \ref{lemma: mainresult}.
\vspace{1em}

\begin{remark} \cite[Lemma~4.4]{Wy} \label{remark:isometriessplit}
In the context of Theorem \ref{thm: wylabel2}, when $g=e^{\frac{2f(r)}{n-1}}g_N+dr^2$ and $N$ does not admit a line, it follows from Theorem \ref{thm: wylabel12} that if $h:M\rightarrow M$ is an isometry, then $h=h_1\times h_2$ where $h_1\in Isom(N)$ and $h_2\in Isom(\mathbb{R})$.
\end{remark}
\vspace{1em}

\begin{remark}\cite[page~208, Remark~8]{ON} \label{remark:ON}
Let $\gamma=(x(t),y(t))$ be a geodesic in the warped product, $M=N\times \mathbb{R}$, where the metric tensor is $g=e^{\frac{2f(r)}{n-1}}g_N+dr^2$. Then, the function $e^{\frac{4f(y(t))}{n-1}}|x'(t)|^2$ is a constant $C$.
\end{remark}
\vspace{1em}

The following theorem, which can be found in \cite{Wy}, is the Splitting Theorem for $\ric_\phi^1$. We will use this to prove our main lemma, Lemma \ref{lemma: mainresult}. 

\begin{theorem}\label{thm: wylabel2}\cite[Lemma~4.4]{Wy}
Suppose that $(M, g, \phi)$ satisfies $\ric_\phi^1\geq 0$ with $\phi$ bounded (above and below)
and contains a line. Then either the Cheeger Gromoll Splitting Theorem holds or $M$ is diffeomorphic to $N\times \mathbb{R}$ and $g=e^{\frac{2f(r)}{n-1}}g_N+dr^2$ where $\phi=f+f_N$ and $(N,g_N)$ does not admit a line.
\end{theorem}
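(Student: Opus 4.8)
The plan is to run the Cheeger--Gromoll argument in the weighted category, using the finite-$N$ comparison in place of $\ric\geq 0$. Since $M$ contains a line $\gamma$, I would first form the two Busemann functions $b^{\pm}(x)=\lim_{t\to\infty}\bigl(t-d(x,\gamma(\pm t))\bigr)$, whose limits exist because $\gamma$ is minimizing. The key analytic input is the weighted mean-curvature comparison for $\ric_\phi^1\geq 0$: writing $\Delta_\phi=\Delta-\langle\nabla\phi,\nabla\,\cdot\,\rangle$ for the drift Laplacian, the synthetic dimension $N=1$ makes the comparison constant $N-1$ vanish, so that $\Delta_\phi\, d(\cdot,\gamma(\pm t))\leq 0$ in the barrier sense. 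Consequently each $b^{\pm}$ is weighted subharmonic, $\Delta_\phi b^{\pm}\geq 0$. This is the step where the hypothesis that $\phi$ is bounded is used, both to guarantee that the Busemann limits are finite and to make the comparison effective uniformly in $t$.

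Next I would show $b^++b^-\equiv 0$. The triangle inequality gives $b^++b^-\geq 0$ with equality along $\gamma$, while $\Delta_\phi(b^++b^-)\geq 0$; running the strong maximum principle for the uniformly elliptic, smooth-coefficient operator $\Delta_\phi$ exactly as in Cheeger--Gromoll forces $b^+\equiv -b^-$. Setting $b:=b^+$, the two one-sided subharmonicities combine to make $b$ weighted harmonic, $\Delta_\phi b=0$, hence smooth by elliptic regularity; and the standard Busemann theory then gives $|\nabla b|\equiv 1$, so the integral curves of $\nabla b$ are unit-speed geodesics and the level sets $b^{-1}(c)$ are smooth hypersurfaces.

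The heart of the argument is the weighted Bochner formula applied to $b$. Since $|\nabla b|^2\equiv 1$ and $\Delta_\phi b\equiv 0$, the cross term and the left-hand side vanish, leaving
\[
0=\tfrac12\Delta_\phi|\nabla b|^2=|\hess b|^2+\ric_\phi^\infty(\nabla b,\nabla b).
\]
Writing $\ric_\phi^\infty=\ric_\phi^1-\tfrac{1}{n-1}\,d\phi\otimes d\phi$ and using $\ric_\phi^1\geq 0$ gives $|\hess b|^2\leq\tfrac{1}{n-1}\langle\nabla\phi,\nabla b\rangle^2$. On the other hand $\hess b(\nabla b,\cdot)=0$, so $\hess b$ is carried by the $(n-1)$-dimensional distribution $\nabla b^\perp$ and has trace $\Delta b=\langle\nabla\phi,\nabla b\rangle$; Cauchy--Schwarz then yields the reverse inequality $|\hess b|^2\geq\tfrac{1}{n-1}\langle\nabla\phi,\nabla b\rangle^2$. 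Equality throughout forces the $n-1$ eigenvalues of $\hess b$ on $\nabla b^\perp$ to coincide, giving the umbilicity relation $\hess b=\tfrac{\langle\nabla\phi,\nabla b\rangle}{n-1}\,(g-db\otimes db)$ together with $\ric_\phi^1(\nabla b,\nabla b)\equiv 0$.

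Finally I would integrate this umbilic structure. Flowing along $\nabla b$ and setting $r=b$, the relation says the level sets evolve conformally with factor $\psi:=\langle\nabla\phi,\nabla b\rangle/(n-1)=\partial_r\phi/(n-1)$; after checking via the Codazzi equations that $\nabla\psi$ is parallel to $\nabla b$, the warped-product characterization gives $M\cong N\times\mathbb{R}$ with $g=dr^2+v^2(r)g_N$ and $v'/v=\psi$, i.e.\ $v=e^{f(r)/(n-1)}$ where $f$ is the $r$-dependent part of $\phi$, so that $\phi=f+f_N$ records the stated decomposition. The dichotomy is then transparent: if $\partial_r\phi\equiv 0$ we recover $\hess b\equiv 0$ and an isometric $\mathbb{R}$-splitting, the Cheeger--Gromoll conclusion, while otherwise the splitting is a genuine warped product; choosing the decomposition maximally and invoking Theorem \ref{thm: wylabel12}(2)—any line in $N$ would lift to a line in $M$ with nonconstant $N$-projection—ensures $(N,g_N)$ admits no line. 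I expect the \emph{main obstacle} to be the first two paragraphs: producing a globally smooth, weighted-harmonic $b$ with $|\nabla b|\equiv 1$, since this is exactly where the critical value $N=1$ and the boundedness of $\phi$ are both indispensable; once $b$ is in hand, the Bochner step is forced, and only the $\nabla\psi\parallel\nabla b$ verification requires further care.
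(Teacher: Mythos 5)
This statement is not proved in the paper at all: it is quoted verbatim as Wylie's Lemma~4.4 of \cite{Wy}, so there is no internal argument to compare against. Judged on its own, your outline is a faithful reconstruction of the strategy Wylie actually uses — weighted Busemann functions, the strong maximum principle for $\Delta_\phi$, the $\Delta_\phi$-Bochner identity, and the rigidity case of Cauchy--Schwarz forcing the umbilic relation $\hess b=\tfrac{\langle\nabla\phi,\nabla b\rangle}{n-1}(g-db\otimes db)$ — and your Bochner/Cauchy--Schwarz paragraph is correct and is genuinely the heart of the matter.

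There is, however, one step that fails as you justify it. You claim that because ``the comparison constant $N-1$ vanishes'' one gets $\Delta_\phi\, d(\cdot,\gamma(\pm t))\leq 0$ in the barrier sense. The comparison $\Delta_\phi r\leq\frac{N-1}{r}$ follows from $\ric_\phi^N\geq 0$ only when $N\geq n$, where $-\frac{1}{N-n}d\phi\otimes d\phi$ has a favorable sign; for $N=1<n$ one has $\ric_\phi^1=\ric_\phi^\infty+\frac{1}{n-1}d\phi\otimes d\phi\geq\ric_\phi^\infty$, so $\ric_\phi^1\geq 0$ is a \emph{weaker} hypothesis and no such comparison is available. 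Concretely, $\mathbb{R}^n$ with $\phi\equiv 0$ satisfies $\ric_\phi^1\geq 0$ yet $\Delta_\phi r=\frac{n-1}{r}>0$. The correct input is Wylie's reparametrized Riccati comparison: setting $m_\phi=\Delta_\phi r$ one derives $m_\phi'+\frac{2\phi'}{n-1}m_\phi+\frac{m_\phi^2}{n-1}\leq 0$ from $\ric_\phi^1\geq0$, and the substitution $s=\int_0^re^{-2\phi(t)/(n-1)}\,dt$, $\lambda=e^{2\phi/(n-1)}m_\phi$ turns this into $\frac{d\lambda}{ds}\leq-\frac{\lambda^2}{n-1}$, whence $m_\phi\leq C(n,K)/r$ using $|\phi|\leq K$ on both exponential factors. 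It is this decay (not nonpositivity) that makes the Busemann functions $\Delta_\phi$-subharmonic, and it is the precise point where \emph{two-sided} boundedness of $\phi$ enters. Two smaller gaps remain: the umbilic relation integrates to a warped product only after you prove $\psi=\langle\nabla\phi,\nabla b\rangle/(n-1)$ is constant on the level sets of $b$ (your ``Codazzi'' remark is where this must live, and it is also what produces the decomposition $\phi=f+f_N$, which you assert rather than derive); and the ``$(N,g_N)$ has no line'' clause needs $v=e^{f/(n-1)}$ bounded above (again from $|\phi|\leq K$) so that a line in $N$ would lift via Theorem~\ref{thm: wylabel12} to a line in $M$ with nonconstant $N$-projection, allowing the splitting to be iterated into the Cheeger--Gromoll branch.
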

\vspace{1em}

We are prepared to state our main result.
\vspace{1em}

\begin{lemma}\label{lemma: mainresult}
Let $(M,g,\phi)$ be a Riemannian manifold with $\ric_\phi^1\geq 0$ and $|\phi|\leq K$ for $K>0$. Suppose there exists $h\in\pi_1(M)$ which does not satisfy the geodesic loops to infinity property along a given ray $\gamma$. Then the lift $\widetilde{\gamma}$ of $\gamma$ is in the split direction, 

$$\widetilde{\gamma}(t)=(x(0),y(t))$$ and $$h_*(\widetilde{\gamma}'(t))=-\widetilde{\gamma}'(t).$$
\end{lemma}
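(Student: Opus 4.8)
The plan is to feed the lift $\widetilde\gamma$ through the splitting forced by the failure of the loops to infinity property, and then read off both conclusions from the product structure of the deck transformation $h$. First, since $h$ does not satisfy the geodesic loops to infinity property along $\gamma$, Theorem~\ref{thm: linetheorem} produces a line $\sigma$ in the universal cover $\widetilde M$. The pulled-back data $(\widetilde M,\widetilde g,\phi\circ\pi)$ still satisfies $\ric_\phi^1\ge 0$ with $|\phi\circ\pi|\le K$, so Theorem~\ref{thm: wylabel2} applies to $\widetilde M$: either the Cheeger--Gromoll splitting holds, or $\widetilde M=N\times\mathbb R$ with $g=e^{2f(r)/(n-1)}g_N+dr^2$, where $(N,g_N)$ admits no line and $\phi=f+f_N$. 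Because $|\phi|\le K$ forces $f$ to be bounded both above and below, the warping factor $w:=e^{f/(n-1)}$ satisfies $0<w_0\le w\le W$; in particular Theorem~\ref{thm: wylabel12} and Remark~\ref{remark:ON} are available. I would treat the warped case in detail, the Cheeger--Gromoll case being the special instance $w\equiv\mathrm{const}$.

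Next I would show that the line $\sigma$ lies in the split direction. Writing $\sigma(s)=(\sigma_N(s),\sigma_{\mathbb R}(s))$, since $\sigma$ is a line and $(N,g_N)$ admits no line, part (2) of Theorem~\ref{thm: wylabel12} forces $\sigma_N$ to be constant; equivalently, the O'Neill constant of $\sigma$ from Remark~\ref{remark:ON} vanishes. Thus $\sigma$ is a vertical geodesic $\{x_\infty\}\times\mathbb R$.

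The heart of the argument, and the step I expect to be the main obstacle, is transferring this verticality from $\sigma$ to the fixed ray $\widetilde\gamma$. In the proof of Theorem~\ref{thm: linetheorem}, $\sigma$ arises as a limit, on compact parameter windows, of deck-translated minimizing geodesics $\widetilde C_i$ joining $\widetilde\gamma(r_i)$ to $h\circ\widetilde\gamma(r_i)$ (with $r_i\to\infty$), re-centered so as to meet a fixed compact set $\widetilde A$. Because the re-centering deck transformations split as products (Remark~\ref{remark:isometriessplit}) and hence preserve $w$ and the O'Neill quantity of Remark~\ref{remark:ON}, the O'Neill constants $C_i$ of the $\widetilde C_i$ converge to that of $\sigma$, namely $0$. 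Since the O'Neill quantity $e^{4f(y)/(n-1)}|x'|^2$ depends only on the unit tangent at a point, it now suffices to compare tangents at the shared point $\widetilde\gamma(r_i)$: as $\widetilde C_i$ runs from the far ray point $\widetilde\gamma(r_i)$ back toward the bounded set $\widetilde A$, a hinge/first-variation comparison forces its unit tangent there to converge to $\pm\widetilde\gamma'(r_i)$, whence the O'Neill constant of $\widetilde\gamma$ equals $\lim_i C_i=0$. As $w\ge w_0>0$, a nonzero constant would, by Remark~\ref{remark:ON}, give $\widetilde\gamma$ an $N$-speed bounded away from $0$, so $\widetilde\gamma_N$ would escape to infinity in $(N,g_N)$ and drift away from the vertical line $\sigma$; the vanishing of the constant instead forces $\widetilde\gamma_N\equiv x(0)$, i.e. $\widetilde\gamma(t)=(x(0),y(t))$. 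The delicate points are the tangent-convergence estimate at $\widetilde\gamma(r_i)$ and the invariance of the O'Neill constant under the re-centering; here the compactness of $A$ furnished by the failure of loops to infinity, together with the two-sided bound on $w$ coming from $|\phi|\le K$, are exactly what make the comparison effective.

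Finally, with $\widetilde\gamma$ vertical, the direction statement is immediate. By Remark~\ref{remark:isometriessplit} the deck transformation splits as $h=h_1\times h_2$ with $h_2\in\mathrm{Isom}(\mathbb R)$, so $h_*\widetilde\gamma'(t)=h_{2*}(\pm\partial_r)=\pm\widetilde\gamma'(t)$. To pin down the sign I would rule out $h_2$ being a translation: if $h_2(r)=r+a$, then sliding the representative loop $C$ along the vertical ray $\widetilde\gamma$ yields loops of uniformly bounded length based at $\gamma(s)$ with $s\to\infty$, which escape every compact set and give $h$ the loops to infinity property, contradicting the hypothesis. Hence $h_2(r)=a-r$ is a reflection, $h_{2*}\partial_r=-\partial_r$, and therefore $h_*(\widetilde\gamma'(t))=-\widetilde\gamma'(t)$, completing the proof.
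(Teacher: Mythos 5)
Your skeleton (Theorem \ref{thm: linetheorem} produces a line in $\widetilde M$, Theorem \ref{thm: wylabel2} gives the warped splitting $N\times\mathbb R$ with $N$ lineless, Theorem \ref{thm: wylabel12} makes the limit line $\sigma$ vertical, Remark \ref{remark:isometriessplit} gives $h=h_1\times h_2$) agrees with the paper's, and your way of ruling out $h_2$ being a translation would be fine once verticality of $\widetilde\gamma$ is known. But the step you yourself identify as the heart --- transferring verticality from $\sigma$ to $\widetilde\gamma$ --- has a genuine gap. You assert that ``a hinge/first-variation comparison forces'' the unit tangent $\widetilde C_i'(0)$ at $\widetilde\gamma(r_i)$ to converge to $\pm\widetilde\gamma'(r_i)$. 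No such comparison is available: the curvature hypothesis $\ric_\phi^1\ge 0$ is a Ricci-type lower bound and gives no Toponogov/hinge angle estimates, and the picture of $\widetilde C_i$ running ``back toward the bounded set $\widetilde A$'' is not accurate --- the intermediate point $\widetilde C_i(t_i)$ only lies in $\pi^{-1}(A)=\bigcup_g g\widetilde A$, which is unbounded; it is the recentered geodesic $h_i\circ\widetilde C_i$ that meets the fixed compact set. What the O'Neill-constant argument actually yields is that $\widetilde C_i'(0)$ becomes \emph{vertical} (its $N$-component tends to $0$); to conclude from this that it aligns with $\pm\widetilde\gamma'(r_i)$ you would already need to know that $\widetilde\gamma'(r_i)$ is asymptotically vertical, which is precisely what is to be proved. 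As written, the central step is circular.

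The paper closes exactly this gap by an integral/length comparison rather than an angle comparison. Writing $\widetilde C_i=(x_i,y_i)$, it uses Remark \ref{remark:ON} together with $|f|\le K$ to upgrade $|x_i'(t_i)|\to 0$ to uniform smallness of $|x_i'(t)|$ along all of $\widetilde C_i$, so that $y_i$ is eventually monotone and $d_{\mathbb R}(y(r_i),h(y(r_i)))/L_i\to 1$. This forces $h_2$ to be a reflection (a translation would keep $|h(y(r_i))-y(r_i)|$ equal to the constant $|h(y(0))-y(0)|$ while $L_i\to\infty$), so the sign conclusion $h_*\widetilde\gamma'=-\widetilde\gamma'$ is obtained \emph{first}; verticality of $\widetilde\gamma$ then follows by squeezing $2\int_0^{r_i}|y'(s)|\,ds/L_i$ between the lower bound $1$ coming from the reflection identity and the upper bound $\lim 2r_i/L_i=1$, which forces $|y'|\equiv 1$ and hence $x'\equiv 0$. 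To repair your proof you would need to replace the hinge assertion with a quantitative argument of this kind; until then the claim that $\widetilde\gamma$ lies in the split direction is unsupported.
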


See Figure \ref{fig:mainlemma} for an image representation of Lemma \ref{lemma: mainresult}.

\begin{proof}
Let $(\widetilde{M},\widetilde{g})$ be the universal cover of $M$. By Theorem \ref{thm: linetheorem}, there exists a line in $\widetilde{M}$. By Theorem \ref{thm: wylabel2}, we have the following cases: either $\widetilde{M}=N\times\mathbb{R}^k$ and $\widetilde{g}=g_N+g_{\mathbb{R}^k}$, or $\widetilde{M}=N\times\mathbb{R}$ with $\widetilde{g}=e^{\frac{2f(r)}{n-1}}g_N+dr^2$, where $N$ contains no lines.

\vspace{1em}
If $\widetilde{g}=g_N+g_{\mathbb{R}^k}$, then we have a product metric, so we can follow the proof of \cite[Proposition~1.9]{So} to obtain the desired conclusion. 
\vspace{1em}

Suppose $\widetilde{g}=e^{\frac{2f(r)}{n-1}}g_N+dr^2$, where $N$ contains no lines. Recall the setup of Theorem \ref{thm: linetheorem}. We know that there are minimal geodesics $\widetilde{C}_i$ running from $\widetilde{\gamma}(r_i)$ to $h\circ\widetilde{\gamma}(r_i)$. \cite[Theorem~1.7]{So}
\vspace{1em}

Let $p_N:\Tilde{M}\rightarrow N$ and $p_{\mathbb{R}}:\Tilde{M}\rightarrow \mathbb{R}$ be the projections onto the $N$ component and the $\mathbb{R}$ component, respectively. Let $\widetilde{C}_i(t)=(x_i(t),y_i(t))$, where $x_i(t):=p_N(\widetilde{C}_i(t))$ and $y_i(t):=p_\mathbb{R}(\widetilde{C}_i(t))$. We have $h_*(\widetilde{C}_i'(t_i))=h_*(x_i'(t_i),y_i'(t_i))=(h_{1*}\circ x_i'(t_i),h_{2*}\circ y_i'(t_i))$ for $t_i\in (0,L_i)$ as in Theorem \ref{thm: linetheorem}. The last equality follows from Remark \ref{remark:isometriessplit}.
\vspace{1em}

We have that $h_*(\widetilde{C}_i'(t_i))\rightarrow \gamma_\infty'(0)$, where we can define $\gamma_\infty'(0)=(x_\infty'(0),y_\infty'(0))$ with $x_\infty(t)=p_N(\gamma_\infty(t))$ and $y_\infty(t)=p_{\mathbb{R}}(\gamma_\infty(t))$. Now, $h_*(x_i'(t_i))\rightarrow x_\infty'(0)$ and $h_*(y_i'(t_i))\rightarrow y_\infty'(0)$. Thus, $\displaystyle\lim_{i\to\infty}|x_i'(t_i)|=\displaystyle\lim_{i\to\infty}|h_*(x_i'(t_i))|=|x_\infty'(0)|$.
\vspace{1em}

By Theorem \ref{thm: wylabel12}, since $\gamma_\infty(t)$ is a line, $x_\infty(t)$ is either the image of a line or constant. However, $N$ doesn't contain any lines, so $|x_\infty'(0)|=0$. Now, using Remark \ref{remark:ON}, $e^{\frac{4f(y(t_i))}{n-1}}|x_i'(t_i)|^2 = e^{\frac{4f(y(0))}{n-1}}|x_i'(0)|^2$. Since $|f|\leq K$ for some $K>0$, for all $t\in \mathbb{R}$,

$$|x_i'(t)|^2=e^{\frac{4f(y(t_i))-4f(y(t))}{n-1}}|x_i'(t_i)|^2\leq B|x_i'(t_i)|^2,\textrm{ where } B=e^{\frac{8K}{n-1}}.$$
\vspace{1em}

Then, $$\displaystyle\lim_{i\to\infty}|x_i'(0)|^2\leq B\lim_{i\to\infty}|x_i'(t_i)|^2=B\displaystyle\lim_{i\to\infty}|h_*(x_i'(t_i))|=B|x_\infty'(0)|=0.$$ 

So, we know that $$\displaystyle\lim_{i\to\infty}|x_i'(0)|^2=0.$$

\vspace{1em} We want to show that for any $t$, there exists $i_0\in \mathbb{N}$ such that for all $i\geq i_0$, $|y_i'(t)|$ is strictly positive. Suppose for the sake of contradiction that there exists some $t_1$ such that for all $i\geq i_0$, $y_i'(t_1)=0$. Then, since $\widetilde{C}_i(t)$ is unit speed, we have
$$|y_i'(t_1)|^2+e^{\frac{2f(t_1)}{n-1}}|x_i'(t_1)|^2=1,\text{ so for all } i\geq i_0,$$
$$|x_i'(t_1)|^2=e^{\frac{-2f(t_1)}{n-1}}(1-|y_i'(t_1)|^2)=e^{\frac{-2f(t_1)}{n-1}}.$$

As $i\rightarrow \infty$, $|x_i'(t_1)|\rightarrow 0$, however, $$0<e^{\frac{-2K}{n-1}}<e^{\frac{-2f(t_1)}{n-1}}<e^{\frac{2K}{n-1}},$$
which is a contradiction. Thus, for all $t$, there exists $i$ large enough so that $|y_i'(t)|$ is strictly positive.

In particular, since $|y_i'(t)|$ is never 0 in $\mathbb{R}$, $|y_i(t)|$ never changes direction, and so $$d_\mathbb{R}(y(r_i),h(y(r_i))= L(y_i(t))=\int_0^{L_i}|y_i'(t)|dt.$$
\vspace{1em}

Now, $\displaystyle\frac{d_\mathbb{R}(y(r_i),h(y(r_i)))}{L_i}=\displaystyle\frac{L(y_i(t))}{L_i}=\frac{\int_0^{L_i}|y_i'(t)|dt}{\int_0^{L_i}\sqrt{e^{\frac{2f(t)}{n-1}}|x_i'(t)|^2+|y_i'(t)|^2}dt}$.

Since $e^{\frac{2f(t)}{n-1}}|x_i'(t)|^2\leq e^{\frac{2K}{n-1}}|x_i'(t)|^2\rightarrow 0$, and $|y_i'(t)|^2=1-e^{\frac{2f(t)}{n-1}}|x_i'(t)|^2$, we get that \[\displaystyle\frac{\int_0^{L_i}|y_i'(t)|dt}{\int_0^{L_i}\sqrt{e^{\frac{2f(t)}{n-1}}|x_i'(t)|^2+|y_i'(t)|^2}dt}\geq \displaystyle\frac{\int_0^{L_i}(1-\varepsilon_i) dt}{\int_0^{L_i}\sqrt{e^{\frac{2K}{n-1}}|x_i'(t)|^2+1}dt}=\displaystyle\frac{(1-\varepsilon_i)L_i}{\int_0^{L_i}\sqrt{e^{\frac{2K}{n-1}}|x_i'(t)|^2+1}dt},\] where $ |x_i'(t)|^2\rightarrow 0$ uniformly by the above as $i\rightarrow \infty$, and $\varepsilon_i\rightarrow 0$.
\vspace{1em}

Thus, 

\begin{equation}\label{equation:distancegeqone}
\displaystyle\lim_{i\to\infty}\frac{d_\mathbb{R}(y(r_i),h(y(r_i)))}{L_i}\geq 1.
\end{equation}

\vspace{1em}

Since $y(t)$ and $h(y(t))$ are in $\mathbb{R}$, we can write $y(t)=\int_0^t y'(s)ds-y(0)$ and $h(y(t))=\int_0^t h_*(y'(s))ds-h(y(0))$. Also, the only possible isometries in $\mathbb{R}$ are reflections, translations, and a combination of the two. We want to show that $h_*$ cannot be a translation. 
\vspace{1em}

Suppose for the sake of contradiction that $h_*(y'(s))=y'(s)$. $$\displaystyle\frac{|h(y(r_i))-y(r_i)|}{L_i}=\frac{|\int_0^{r_i} y'(s)-\int_0^{r_i} y'(s)-h(y(0))+y(0)|}{L_i}=\frac{|h(y(0))-y(0)|}{L_i}.$$

Taking the limit of both sides, we get $\displaystyle\lim_{i\rightarrow\infty}\displaystyle\frac{|h(y(r_i))-y(r_i)|}{L_i}=0$, which is a contradiction. Thus, $h_*$ must be a reflection, and

\begin{equation}\label{equation:derivativesplit}
    h_*(\widetilde{\gamma}'(0))=-\widetilde{\gamma}'(0).
\end{equation}

\vspace{1em}

In order to show that $\Tilde{\gamma}$ is in the split direction, along with showing (\ref{equation:derivativesplit}), we must also show that $|x'(s)|=0$ for all $s$. We proceed by using (\ref{equation:derivativesplit}) to show that $\displaystyle\lim_{i\rightarrow\infty}\frac{2\int_0^{r_i}|y'(s)|ds}{L_i}=1$.
\vspace{1em}

By (\ref{equation:derivativesplit}), we have the following equality:\\

$\displaystyle\frac{2|\int_0^{r_i}y'(s)ds|}{L_i}=\displaystyle\frac{|\int_0^{r_i}y'(s)ds-\int_0^{r_i}h_*(y'(s))ds|}{L_i}$.\\

By the Fundamental Theorem of Calculus and the Triangle Inequality,\\

$=\displaystyle\frac{|y(r_i)-y(0)-h(y(r_i))+h(y(0))|}{L_i}\geq \displaystyle\frac{|h(y(r_i))-y(r_i)|}{L_i}-\displaystyle\frac{|h(y(0))-y(0)|}{L_i}$.\\

Taking the limit of both sides, and by (\ref{equation:distancegeqone}),\\

$\displaystyle\lim_{i\rightarrow\infty}\displaystyle\frac{2|\int_0^{r_i}y'(s)ds|}{L_i}\geq 1$.\\

On the other hand, since $|y'(s)|=1-e^{\frac{2f(s)}{n-1}}|x'(s)|\leq 1$,\\

$\displaystyle\lim_{i\rightarrow\infty}\frac{2\int_0^{r_i}|y'(s)|ds}{L_i}\leq \displaystyle\lim_{i\rightarrow\infty}\frac{2r_i}{L_i}=1$. This equality comes from \cite[Note~2.1]{So}.\\

Hence, $|y'(s)|=1$, so $|x'(s)|=0$, $\Tilde{\gamma}(t)=(x(0),y(t))$, and $\Tilde{\gamma}$ is in the split direction. 
\end{proof}
%\vspace{1em}

\begin{figure}[h]
    \centering
    \includegraphics[width=0.26\textwidth, height=0.27\textheight]{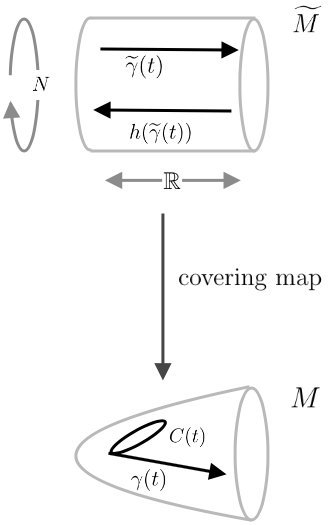}
    \caption{$C(t)$ is a representation of $h$ based at $\gamma(0)$. If $M$ satisfies the assumptions in Lemma \ref{lemma: mainresult}, then $\widetilde{\gamma}$ is in the split direction and $h(\widetilde{\gamma})$ is also in the split direction but facing the opposite direction of $\widetilde{\gamma}$.}
    \label{fig:mainlemma}
\end{figure}
\vspace{1em}

\begin{corollary}\label{corollary:flatnormalbundle}
If $M^n$ is a complete noncompact manifold with $\ric_\phi^1\geq 0$, $|\phi|$
bounded, and there exists an element $h\in\pi_1(M)$ which doesn't satisfy the loops to infinity property along a given ray $\gamma$, then $M^n$ is a flat normal bundle over a compact totally geodesic soul. 
\end{corollary}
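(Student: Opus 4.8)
The plan is to turn the reflection structure supplied by Lemma \ref{lemma: mainresult} into an explicit description of $M$ as the total space of a vector bundle, and then to identify the zero section as a compact totally geodesic soul. First I would combine Lemma \ref{lemma: mainresult} with Theorem \ref{thm: wylabel2}: the universal cover $\widetilde M$ splits, either isometrically as $N\times\mathbb{R}^k$ or as a warped product $N\times\mathbb{R}$ with $\widetilde g=e^{2f(r)/(n-1)}g_N+dr^2$, where $(N,g_N)$ admits no line; the lift $\widetilde\gamma$ lies in the split direction, $\widetilde\gamma(t)=(x_0,t)$, and $h$ reflects it, $h_*\widetilde\gamma'=-\widetilde\gamma'$. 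By Remark \ref{remark:isometriessplit} every deck transformation $g\in\pi_1(M)$ decomposes as $g_1\times g_2$ with $g_2\in\mathrm{Isom}(\mathbb{R})$, so $g\mapsto g_2$ defines a homomorphism $p\colon\pi_1(M)\to\mathrm{Isom}(\mathbb{R})$ whose image is discrete and contains the reflection $p(h)$. Note also that $\phi=f+f_N$ with $|\phi|\le K$ forces both $f$ and $f_N$ to be bounded, so the warping $e^{2f(r)/(n-1)}$ is bounded above and below.

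The next step is to pin down this image. A discrete subgroup of $\mathrm{Isom}(\mathbb{R})$ containing a reflection is either $\mathbb{Z}_2$ or the infinite dihedral group $D_\infty$. The dihedral case would force the $\mathbb{R}$-coordinate of $M$ to be a bounded interval, contradicting the fact that $\gamma=\pi(\widetilde\gamma)$ is a proper ray escaping every compact set in the split direction; hence $p(\pi_1(M))=\langle\,\text{reflection at }r_0\,\rangle\cong\mathbb{Z}_2$. Consequently $\Gamma_+:=\ker p$ has index two and acts trivially on the $\mathbb{R}$-factor. Setting $\widehat M:=\widetilde M/\Gamma_+=\widehat L\times\mathbb{R}$ with $\widehat L:=N/\Gamma_+$, I obtain a two-fold cover of $M$ with $M=\widehat M/\langle\bar h\rangle$, where $\bar h=\bar h_1\times(\text{reflection at }r_0)$. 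Since $\bar h$ is a nontrivial deck transformation it acts freely, so $\bar h_1$ is a free involution of $\widehat L$.

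With this in hand the bundle structure is immediate. Because $\bar h$ is an isometry fixing the slice $\{r=r_0\}$, the warping function satisfies $f(2r_0-r)=f(r)$, so $f'(r_0)=0$ and the slice $\widehat L\times\{r_0\}$ is totally geodesic. Its image $S:=\widehat L/\bar h_1$ is therefore a totally geodesic hypersurface, and $M=(\widehat L\times\mathbb{R})/\langle\bar h\rangle$ is exactly the real line bundle associated to the double cover $\widehat L\to S$ through the sign representation; equivalently, $M$ is diffeomorphic to the normal bundle of $S$ with zero section $S$. The $\partial_r$-lines are geodesics, so they trivialize the normal connection along each fiber and exhibit the normal bundle as flat. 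In the product case $N\times\mathbb{R}^k$ the same argument, applied after splitting off the maximal Euclidean factor, realizes $M$ as a flat rank-$k$ normal bundle, the flatness now coming from parallelism of the $\mathbb{R}^k$-directions.

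The remaining, and main, difficulty is to show that the soul $S$ is compact, equivalently that the cross-section $N$ is compact; this does not follow formally from the splitting, since a complete manifold without lines need not be compact. I expect to obtain it by analyzing the $N$-direction component of the curvature hypothesis: I would show that $\ric_\phi^1\ge0$ on the warped product, together with the boundedness of the warping and the absence of lines in $N$, forces $\ric_{g_N}$ to be bounded below by a positive constant, whence the Bonnet--Myers theorem yields compactness of $N$ and of $S$ (one may alternatively cite the cross-section compactness in Wylie \cite{Wy}). Extracting this uniform positive lower bound on $\ric_{g_N}$ from the mixed warped-product Ricci identities, controlling the warping terms uniformly in $r$ via $|\phi|\le K$, is the technical heart of the argument, and it is exactly the analogue here of the compact-factor conclusion of the Splitting Theorem used in the orientable case of Theorem \ref{thm:2}.
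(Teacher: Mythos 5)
The paper itself states this corollary without proof, presenting it as the $\ric_\phi^1$ analogue of Sormani's flat-normal-bundle theorem \cite[Theorem~1.2]{So}, to be deduced from Lemma \ref{lemma: mainresult} and Theorem \ref{thm: wylabel2} in the same way Sormani deduces her result from the split structure of the universal cover. Your overall skeleton --- split (double) cover, index-two subgroup acting by translations, quotient by a reflection-type involution giving a line bundle over a totally geodesic hypersurface --- is the right architecture and matches that intended route. Two steps, however, are genuinely problematic.

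First, and most seriously, your strategy for compactness of the soul cannot work. You propose to extract a \emph{positive} uniform lower bound on $\ric_{g_N}$ from $\ric_\phi^1\geq 0$ and then apply Bonnet--Myers. But the hypothesis is a non-strict inequality, and even in the unwarped case ($\phi$ constant, so $\ric_\phi^1=\ric$) the product $P\times\mathbb{R}$ with $P$ a paraboloid has $\ric\geq 0$, contains lines, and has a cross-section $P$ with no lines, noncompact, and with Ricci curvature decaying to $0$; so ``$\ric_\phi^1\geq 0$ $+$ bounded warping $+$ no lines in $N$'' does not imply $\ric_{g_N}\geq c>0$, and Bonnet--Myers is a dead end. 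In Sormani's argument the compactness is not a curvature statement at all: it comes from the topological hypothesis that $h$ fails the loops to infinity property along the \emph{minimal} ray $\gamma$, i.e.\ from the compact set $A$ that every loop homotopic to $C$ along $\gamma$ must meet, together with the fact (established inside Lemma \ref{lemma: mainresult}) that the connecting minimal geodesics $\widetilde{C}_i$ become asymptotically vertical; this is what traps the cross-section of the split double cover in a bounded set. Your proof as written replaces the essential input of the theorem with a false curvature implication. Second, your reduction of the image of $p\colon\pi_1(M)\to\mathrm{Isom}(\mathbb{R})$ to $\mathbb{Z}_2$ assumes the image is discrete, which is not automatic for a projection of a properly discontinuous group when the cross-section $N$ is not yet known to be compact, and your exclusion of the infinite dihedral case (``the $\mathbb{R}$-coordinate of $M$ would be a bounded interval'') needs an actual comparison between $d_M(\gamma(t),\gamma(t+2kc))=2kc$ and the displacement of $g^k$ in the $N$-factor, which can grow with $k$; this step needs the minimality of $\gamma$ in $M$ used quantitatively, as in part (ii) of Theorem \ref{theorem:mainlemmashowsupinthistheorem}. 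The remaining pieces --- totally geodesic fixed slice via $f(2r_0-r)=f(r)$, flatness of the associated line bundle because its structure group is $O(1)$ --- are fine.
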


We are now ready to prove Theorem \ref{thm:2} in the non-orientable case.

\begin{corollary}\label{cor:integralhomologybutnon-orientable}
 Let $M^n$ be a complete non-orientable Riemannian manifold and suppose one of the following holds:
\begin{enumerate}
    \item $\ric_X^N\geq 0$ with $N>n$.
    \item $\ric_\phi^N\geq 0$ with $N=\infty$, $\phi$ bounded above.
    \item $\ric_\phi^N\geq 0$ with $N\leq 1$ and $\phi$ bounded above.
    \item $\ric_\phi^N\geq 0$ with $N=\infty$, $\nabla\phi\rightarrow 0$ at $\infty$.
\end{enumerate}
Then $H_{n-1}(M,\mathbb{Z})=0$ or $\mathbb{Z}$.
\end{corollary}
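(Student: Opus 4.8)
The plan is to mirror the orientable case of Theorem \ref{thm:2} just proved, but to interchange the roles played by the (non)orientability of the compact cross–section, and to replace Carron–Pedon's Proposition \ref{prop:carronandpedon} (which requires orientability) by the loops to infinity machinery of Sormani's Line Theorem (Theorem \ref{thm: linetheorem}) and Lemma \ref{lemma: mainresult}. In every case the computation of $H_{n-1}(M,\mathbb{Z})$ is reduced to the top-degree homology $H_{\dim L}(L,\mathbb{Z})$ of a closed manifold $L$, which is $\mathbb{Z}$ when $L$ is orientable and $0$ when $L$ is non-orientable; since $\dim L\le n-1$, this always lands in $\{0,\mathbb{Z}\}$. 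I organize the argument by the number of ends of $M$.

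First suppose $M$ has two or more ends. Joining a ray running out one end to a ray running out another produces a line, so each of the four hypotheses yields a splitting via Theorem \ref{thm: wylabel2} and the versions of the Splitting Theorem tabulated in Section \ref{section:splitting}; thus $M$ is diffeomorphic to a (possibly warped) product $\mathbb{R}\times L$ with $L$ a closed $(n-1)$-manifold, the cross-section being compact precisely because $\mathbb{R}\times L$ has two ends. Because $M$ is non-orientable while the $\mathbb{R}$-factor is orientable, $L$ must be non-orientable, whence $H_{n-1}(M,\mathbb{Z})=H_{n-1}(L,\mathbb{Z})=0$. The only point needing care is the $N=1$ hypothesis, where the Splitting Theorem is replaced by a warped product splitting; but a warped product over $\mathbb{R}$ is still diffeomorphic to $\mathbb{R}\times L$, so the homology computation is unaffected.

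Now suppose $M$ has a single end. If $M$ satisfies the loops to infinity property, then by the argument of Shen–Sormani \cite{ShSo} one-endedness together with loops to infinity forces every integral $(n-1)$-cycle to bound, so $H_{n-1}(M,\mathbb{Z})=0$. If instead some $h\in\pi_1(M)$ fails the loops to infinity property along a ray $\gamma$, then Theorem \ref{thm: linetheorem} produces a line in the universal cover $\widetilde M$, the relevant Splitting Theorem splits $\widetilde M$, and Lemma \ref{lemma: mainresult} (in the warped $N=1$ case) or its product-metric analogue from \cite[Proposition~1.9]{So} (in the remaining cases) shows that $h$ acts on the split universal cover as a reflection in the line direction, i.e.\ $h_*(\widetilde\gamma'(t))=-\widetilde\gamma'(t)$. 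Passing to the index-two subgroup of deck transformations acting by translations on the $\mathbb{R}$-factor yields a two-ended split double cover $\widehat M\cong\mathbb{R}\times\widehat L$ with $\widehat L$ closed, on which $h=h_1\times h_2$ acts with $h_2$ a reflection fixing a level $\{c/2\}\times\widehat L$ and $h_1$ a free involution of $\widehat L$. Then $M=\widehat M/\langle h\rangle$ deformation retracts onto the closed $(n-1)$-manifold $L'=\widehat L/\langle h_1\rangle$ sitting over that level, so $H_{n-1}(M,\mathbb{Z})\cong H_{n-1}(L',\mathbb{Z})\in\{0,\mathbb{Z}\}$. This is the homological content of the flat-normal-bundle-over-soul description of Corollary \ref{corollary:flatnormalbundle} (with $L'$ playing the role of the soul) in the $N=1$ setting. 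Combining the cases, $H_{n-1}(M,\mathbb{Z})$ is always $0$ or $\mathbb{Z}$.

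I expect the main obstacle to be the orientation bookkeeping in the one-ended, non–loops-to-infinity case: one must verify that the reflection $h$ descends so that $L'=\widehat L/\langle h_1\rangle$ genuinely realizes both possibilities $H_{n-1}(L',\mathbb{Z})=\mathbb{Z}$ and $H_{n-1}(L',\mathbb{Z})=0$, and that each is compatible with $M$ being non-orientable — this is exactly the point at which the orientable case concluded $0$ but the non-orientable case must be allowed the value $\mathbb{Z}$ (realized, for instance, by the flat open M\"obius band retracting onto its core soul). A secondary technical point is checking that the generalized splitting hypotheses transfer to the cover $\widetilde M$ and supply the structure used above — in particular that $\phi$ bounded above (for $N=\infty$) and $|\phi|$ bounded (for the warped $N=1$ case) give the compactness of $\widehat L$ and the product/warped-product diffeomorphism type, and that the $N=\infty$ condition $\nabla\phi\to0$ at infinity can be used on $\widetilde M$ through the Fang–Li–Zhang criterion along the relevant rays.
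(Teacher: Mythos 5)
Your argument is correct and follows essentially the same route as the paper: case on the number of ends, use the (warped) splitting theorems to reduce the multi-ended case to the non-orientable compact cross-section, use the Shen--Sormani topological argument in the one-ended loops-to-infinity case, and use Lemma \ref{lemma: mainresult} together with the flat-normal-bundle-over-a-compact-soul description (Corollary \ref{corollary:flatnormalbundle}) in the remaining case. The only cosmetic differences are that the paper routes the one-ended loops-to-infinity case through the orientation double cover before invoking \cite{ShSo}, and in the final case it pins the answer down to exactly $\mathbb{Z}$ via the orientability relation between $M$ and its soul, whereas you settle for the value lying in $\{0,\mathbb{Z}\}$, which suffices for the statement.
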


\begin{proof}

We will only prove the $\ric_\phi^N\geq 0$ with $N\leq 1$ and $\phi$ bounded above case because the other cases follow similarly to \cite{ShSo}.
\vspace{1em}

Suppose $M$ is a two-ended manifold. Then by the Cheeger-Gromoll Splitting Theorem, $M$ splits isometrically as $L\times \mathbb{R}$ where $L$ is compact and has the same orientability as $M$. Then, since $M$ is non-orientable, $H_{n-1}(M,\mathbb{Z})=H_{n-1}(N,\mathbb{Z})=0$.
\vspace{1em}

Suppose $M$ is a one-ended manifold. Suppose $M^n$ satisfies loops to infinity property. $M$ has a double cover $\pi: \widetilde{M} \to M$ such
that $\widetilde{M}$ is orientable. We first claim that $\widetilde{M}$ has only one end.
\vspace{1em}

Assume for the sake of contradiction that $\widetilde{M}$ has two or more ends. By \cite[Lemma~4.4]{Wy}, either $\widetilde{M}$ splits isometrically as $\widetilde{M} = N^{n-1} \times \mathbb{R}$ where $N$ is compact, in which case we follow the proof of \cite[Propostion~3.2]{ShSo}, or $\widetilde{M}=N\times \mathbb{R}$ with $g=e^{\frac{2\phi}{n-1}}g_N+dr^2$ where $N$ contains no lines. Since $\widetilde{M}$ is orientable, so is the totally geodesic submanifold,
$N^{n-1}$. 
\vspace{1em}

Noting that $G(\widetilde{M})=\bigslant{\mathbb{Z}}{2\mathbb{Z}}$, let $h$ be the nontrivial deck transformation acting on $\widetilde{M}$ (i.e. $h_{\mathbb{R}}(r)\neq r$). By Theorem \ref{thm: wylabel2}, $h=h_\mathbb{R}\times h_N$, where $h_\mathbb{R}:\mathbb{R}\rightarrow \mathbb{R}$ and $h_N:N\rightarrow N$. Since $h_\mathbb{R}$ is an isometry, $h_\mathbb{R}(r)=\pm r+r_0$. If $h_\mathbb{R}(r)=r+r_0$, then $h_\mathbb{R}^2(r)=r+2r_0$. Since $h_\mathbb{R}^2(r)=r$, this implies that $r_0=0$, so $h_\mathbb{R}(r)=r$, which is a contradiction. Hence, $h_\mathbb{R}(r)=-r+r_0$.
\vspace{1em}

Now, we can use the topology of $M$ to show that $\widetilde{M}$ is one-ended and has the loops to infinity property. The interested reader can look at \cite[Proposition~3.2]{ShSo} for more details. By \cite[Proposition~2.1]{ShSo} , $H_{n-1}(\widetilde{M}, G)$ is trivial. Using the Universal Coefficient Theorem \cite[Theorem~55.1]{Mu}, $H_{n-1}(M,\mathbb{Z})=0$.
\vspace{1em}

Suppose $M^n$ be a one-ended and doesn't have a ray with the loops to infinity property. Since $M^n$ doesn't have a ray with loops to infinity property, by Corollary \ref{corollary:flatnormalbundle}, $M^n$ is a flat normal bundle over a compact totally geodesic soul. Since $M^n$ is one-ended, $N$ is orientable if and only if $M$ is non-orientable, so $H_{n-1}(M,G)=H_{n-1}(N,G)=\mathbb{Z}$.
\end{proof}

Next, we prove Theorem \ref{thm:main theorem with Abelian group}, which generalizes Theorem \ref{thm:2} to classify the $n-1$ homologies with coefficients in Abelian groups of spaces with nonnegative $N$-Bakry \'Emery Ricci curvature. This is the $N$-Bakry \'Emery Ricci curvature analog of Shen-Sormani's \cite[Theorem~1.1]{ShSo} and can be proved in the same way as their theorem except with Theorem \ref{thm: wylabel2} instead of the Cheeger-Gromoll Splitting Theorem. We give a sketch of the proof below. 
\vspace{1em}

\begin{proof}[Proof of Theorem \ref{thm:main theorem with Abelian group}]
Consider $M$ with two or more ends. If $\ric_\phi^N\geq 0$ with $N= 1$ and $\phi$ bounded above, then $M$ splits as $N\times \mathbb{R}$ as in Theorem \ref{thm: wylabel2}. If $N$ is orientable, then $H_{n-1}(M,G)$ is $G$, and if $N^{n-1}$ is not orientable, then $H_{n-1}(M,G)$ is $\ker(G\overset{\times2}\to G)$. In all other cases when $M$ has two or more ends, we use the Cheeger-Gromoll Splitting Theorem instead of Theorem \ref{thm: wylabel2}, as in \cite[Proposition~3.1]{ShSo} to get the same conclusion.
\vspace{1em}

If $M$ is one-ended with the loops to infinity property, then using Poincare Duality, Universal Coefficient Theorem, and other topological arguments, we get the desired result. Since this proof only uses topology, the proof is the same as \cite[Proposition~2.1]{ShSo}.
\vspace{1em}

Suppose $M$ is one-ended and doesn't have a ray with the loops to infinity property. If $\ric_\phi^N\geq 0$ with $\phi$ bounded above, then by Corollary \ref{corollary:flatnormalbundle}, $M^n$ is a flat normal bundle over a compact totally geodesic soul, $N^{n-1}$. In all other cases where $\ric_X^N\geq 0$ and $N$ is not 1, we use \cite[Theorem~1.2]{So} to get that $M^n$ is a flat normal bundle over a compact totally geodesic soul, $N^{n-1}$. Then, using that $M$ is one-ended, we get the desired conclusion. This proof is the same as \cite[Proposition~3.3]{ShSo}, except we use Corollary \ref{corollary:flatnormalbundle} in the $N=1$ case.
\end{proof}
%\vspace{1em}

\section*{Acknowledgements}
The author would like to thank her thesis advisor, Professor William Wylie, for his guidance, patience, and many helpful insights.\\

The author would also like to thank Professor Christina Sormani for helpful discussions.\\

This work was partially supported by NSF grant DMS-1654034. 

\bibliographystyle{apa}
\bibliography{sample}

\begin{thebibliography}{}

\bibitem[\protect\astroncite{Bakry and \'Emery}{1985}]{BE}
Bakry, D. and \'Emery, M. (1985).
\newblock Diffusions hypercontractives.
\newblock {\em Lecture Notes in Mathematics Séminaire De Probabilit\'es XIX
  1983/84}, pages 177--206.

\bibitem[\protect\astroncite{Carron and Pedon}{2004}]{CarPed}
Carron, G. and Pedon, E. (2004).
\newblock On the differential form spectrum of hyperbolic manifolds.
\newblock {\em Annali della Scuola Normale Superiore di Pisa - Classe di
  Scienze}, 3(4):705--747.

\bibitem[\protect\astroncite{Cheeger and Gromoll}{1971}]{ChGr}
Cheeger, J. and Gromoll, D. (1971).
\newblock The splitting theorem for manifolds of nonnegative {R}icci curvature.
\newblock {\em Journal of Differential Geometry}, 6(1):119--128.

\bibitem[\protect\astroncite{Fang et~al.}{2009}]{FLZ}
Fang, F., Li, X.-D., and Zhang, Z. (2009).
\newblock Two generalizations of {C}heeger-{G}romoll splitting theorem via
  {B}akry-\'{E}mery {R}icci curvature.
\newblock {\em Annales De l'Institut Fourier}, 59(2):563--573.

\bibitem[\protect\astroncite{Fern\'andez-L\'opez and
  Garc\'ia-R\'io}{2008}]{LopezRio}
Fern\'andez-L\'opez, M. and Garc\'ia-R\'io, E. (2008).
\newblock A remark on compact ricci solitons.
\newblock {\em Mathematische Annalen}, 340:893–--896.

\bibitem[\protect\astroncite{ichi Ohta}{2016}]{Ohta}
ichi Ohta, S. (2016).
\newblock $(k, n)$-convexity and the curvature-dimension condition for negative
  $n$.
\newblock {\em The Journal of Geometric Analysis}, 26:2067–--2096.

\bibitem[\protect\astroncite{Khuri et~al.}{2008}]{KhWoWy}
Khuri, M., Woolgar, E., and Wylie, W. (2008).
\newblock New restrictions on the topology of extreme black holes.
\newblock {\em Letters in Mathematical Physics}, 109(3):661--673.

\bibitem[\protect\astroncite{Lichnerowicz}{1971}]{Lich}
Lichnerowicz, A. (1971).
\newblock Vari\'et\'es {K}ähl\'eriennes \'a premi\'ere classe de {C}hern non
  negative et vari\'et\'es {R}iemanniennes \'a courbure de {R}icci
  g\'en\'eralis\'ee non negative.
\newblock {\em Journal of Differential Geometry}, 6(1):47--–94.

\bibitem[\protect\astroncite{Lott}{2003}]{Lott}
Lott, J. (2003).
\newblock Some geometric properties of the {B}akry–\'{E}mery–{R}icci
  tensor.
\newblock {\em Commentarii Mathematici Helvetici}, 78:865--883.

\bibitem[\protect\astroncite{Milman}{2017}]{Milman}
Milman, E. (2017).
\newblock Beyond traditional curvature-dimension i: New model spaces for
  isoperimetric and concentration inequalities in negative dimension.
\newblock {\em Transactions of the American Mathematical Society},
  369:3605--3637.

\bibitem[\protect\astroncite{Munkres}{2018}]{Mu}
Munkres, J.~R. (2018).
\newblock {\em Elements of Algebraic Topology}.
\newblock CRC Press.

\bibitem[\protect\astroncite{Munteanu and Wang}{2014}]{MunteanuWang}
Munteanu, O. and Wang, J. (2014).
\newblock Geometry of manifolds with densities.
\newblock {\em Advances in Mathematics}, 259:269--305.

\bibitem[\protect\astroncite{O’Neill}{1983}]{ON}
O’Neill, B. (1983).
\newblock {\em Semi-Riemannian Geometry with Applications to Relativity},
  volume 103.
\newblock Academic Press, Inc.

\bibitem[\protect\astroncite{Petersen}{2006}]{PP}
Petersen, P. (2006).
\newblock {\em Riemannian Geometry}, volume 171.
\newblock CRC Press.

\bibitem[\protect\astroncite{Qian}{1997}]{ZhongminQian}
Qian, Z. (1997).
\newblock Estimates for weighted volumes and applications.
\newblock {\em Quarterly Journal of Mathematics}, 48(2):235--242.

\bibitem[\protect\astroncite{Shen and Sormani}{2001}]{ShSo}
Shen, Z. and Sormani, C. (2001).
\newblock The codimension one homology of a complete manifold with nonnegative
  {R}icci curvature.
\newblock {\em American Journal of Mathematics}, 123(3):515--524.

\bibitem[\protect\astroncite{Sormani}{2001}]{So}
Sormani, C. (2001).
\newblock On loops representing elements of the fundamental group of a complete
  manifold with nonnegative {R}icci curvature.
\newblock {\em Indiana University Mathematics Journal}, 50:1867–--1884.

\bibitem[\protect\astroncite{Wei and Wylie}{2009}]{WeiWylie}
Wei, G. and Wylie, W. (2009).
\newblock Comparison geometry for the {B}akry-\'{E}mery {R}icci tensor.
\newblock {\em Journal of Differential Geometry}, 83(2):337–--405.

\bibitem[\protect\astroncite{Woolgar and Wylie}{2018}]{WylieWoolgar}
Woolgar, E. and Wylie, W. (2018).
\newblock Curvature-dimension bounds for {L}orentzian splitting theorems.
\newblock {\em Journal of Geometry and Physics}, 132:131--145.

\bibitem[\protect\astroncite{Wylie}{2017}]{Wy}
Wylie, W. (2017).
\newblock A warped product version of the {C}heeger-{G}romoll splitting
  theorem.
\newblock {\em Transactions of the American Mathematical Society},
  369(9):6661–--6681.

\bibitem[\protect\astroncite{Yau}{1976}]{Yau}
Yau, S.-T. (1976).
\newblock Some function-theoretic properties of complete {R}iemannian manifolds
  and their applications to geometry.
\newblock {\em Indiana University Mathematics Journal}, 25(7):659--670.

\end{thebibliography}

\end{document}